\documentclass[12pt]{article}
\usepackage{amsbsy}
\usepackage{amssymb,amsfonts,amsthm,amsmath,bbm,mathrsfs,aliascnt,mathtools}
\usepackage{float,inputenc,upgreek}
\usepackage{doi,url}
\usepackage[numbers]{natbib}
\usepackage{etoolbox,letltxmacro,xifthen,ifdraft} % reinforce the environment
\usepackage[dvipsnames]{xcolor}
\usepackage{amsmath,amssymb,enumitem,bbm,mathrsfs,amsthm,aliascnt,mathtools}
\usepackage{graphicx,subcaption}
\usepackage{hyperref} % should be last
\hypersetup{final,hidelinks}
\usepackage[atend]{bookmark} % should be laster
\providecommand{\email}[1]{\href{mailto:#1}{\nolinkurl{#1}}}

\setlist[enumerate,1]{label={(\roman*)}}
\setlist[enumerate,2]{label={(\alph*)}}
\setlist[enumerate,3]{label={(\Roman*)}}
\newcommand{\newsstheorem}[2]{
  \newaliascnt{#1}{dummy}
  \newtheorem{#1}[#1]{#2}
  \aliascntresetthe{#1}
  \expandafter\def\csname #1autorefname\endcsname{#2}
}
\numberwithin{dummy}{section}
\theoremstyle{plain}
  \newsstheorem{theorem}{Theorem}
  \newsstheorem{proposition}{Proposition}
  \newsstheorem{corollary}{Corollary}
  \newsstheorem{lemma}{Lemma}
  \newsstheorem{definition}{Definition}
  \theoremstyle{definition}
  \newsstheorem{example}{Example}
  \newsstheorem{question}{Question}
\theoremstyle{remark}
  \newsstheorem{remark}{Remark}

\newenvironment{eqnarr*}{\begin{IEEEeqnarray*}{rCl}}{\end{IEEEeqnarray*}\ignorespacesafterend}

\newcommand{\N}{\mathbb{N}}
\newcommand{\Z}{\mathbb{Z}}
\newcommand{\R}{\mathbb{R}}

\newcommand{\calC}{\mathcal{C}}
\newcommand{\calCp}{\mathcal{C}_{>0}}

\renewcommand{\S}{\mathbf{S}}
\newcommand{\Pp}{\mathbf{P}}
\newcommand{\Ee}{\mathbf{E}}
\newcommand{\X}{\mathbf{X}}

\renewcommand{\bar}[1]{{#1}}
\renewcommand{\tilde}[1]{\widetilde{#1}}

\newcommand{\e}{\mathrm{e}}
\newcommand{\q}{\boldsymbol{\Theta}}

\DeclareMathOperator{\E}{\mathbb{E}}

\renewcommand{\P}{\mathbb{P}}

\newcommand{\calX}{\mathcal{X}}
\newcommand{\calP}{\mathcal{P}}
\newcommand{\calS}{\mathcal{S}}
\newcommand{\s}{\mathbf{s}}
\renewcommand{\a}{\mathbf{a}}
\newcommand{\g}{\mathbf{g}}
\newcommand{\G}{\mathbf{G}}
\newcommand{\kk}{\mathbf{k}}
\newcommand{\Kk}{\mathbf{K}}

\newcommand{\z}{\boldsymbol{\zeta}}
\renewcommand{\t}{\mathbf{t}}
\renewcommand{\d}{\mathrm{d}}
\newcommand{\eps}{\varepsilon}

\makeatletter \newcommand\mathof[1]{{\operator@font#1}} \makeatother
\begin{document}
\title{ Preferential relocations enhance survival \\
for  Markov chains with killing}

\author{
Jean Bertoin$^{*}$
\and
Martin Minchev$^{*,\dagger}$
}

\date{}
\maketitle
\thispagestyle{empty}

\begingroup
\renewcommand{\thefootnote}{\fnsymbol{footnote}}
\footnotetext[1]{Institute of Mathematics, University of Zurich, Switzerland.}
\footnotetext[2]{Faculty of Mathematics and Informatics, Sofia University, Bulgaria.}
\footnotetext[3]{The work of the second author was supported by the SNSF SCIEX programme, grant IZSF-0-235789.}
\endgroup

\renewcommand{\thefootnote}{\arabic{footnote}}
\setcounter{footnote}{0}
\thispagestyle{empty}

\begin{abstract} 
{We investigate the impact on survival of a modification of the evolution of a sub-stochastic Markov chain  that involves random relocations at previously visited states. 
Our central result is that such preferential relocations  increase the persistence rate, meaning the survival probability decays more slowly than for the  benchmark chain without relocations, and this improvement is strict under mild assumptions. We derive explicit lower bounds on the
ratio of persistence rates  when the relocation distribution is highly dispersed. The analysis relies on ergodic properties of so-called chains with complete connections, and a Feynman--Kac approach to estimate persistence. }
\newline  \vskip 1mm
{\normalfont \bfseries Keywords:}
{Markov chain with killing, persistence rate,  chain with complete connections, preferential relocation, spectral radius.}
\newline  \vskip 1mm
{\normalfont \bfseries Mathematics Subject Classification:}  60J05; 60J10; 60F10; 37A30; 37A50.

\end{abstract} 

\section{Introduction} \label{Sec:intro}

The notion of stochastic resetting has been first developed  in the physics literature as an intermittent search strategy such that  sequences of steps --called runs-- driven by a Markov kernel alternate with resetting events at which the process is repositioned at its initial state. We refer to the survey \cite{EMS} for  motivations and examples of applications,  explicit calculations, and an extensive list of references published before 2020. Preferential relocation refers to a variation of the former, such that after each  run,  the process is rather repositioned at a state visited
at some random time in its past, instead of at a fixed state; see \cite[Section 7]{EMS}. This has been proposed in \cite{BSS} as a  model for territorial exploration by an animal, and hence sometimes referred to as a monkey walk;  see \cite{MUB, BM1, BM2}. It also appears in models for dormancy, including various seed bank models in population dynamics, atavism in evolutionary biology, etc. See the survey \cite{BKSW} and references therein, and also \cite{BerBer}.

We are interested here in the following simple random evolution in this vein. Let $\sigma$ be an irreducible and aperiodic sub-stochastic matrix; it drives a Markov chain with killing which serves us as benchmark. We modify the dynamic at each step  by making a transition, still according to $\sigma$, but from the location of the chain $T$ units of time in the past rather than from its current location, where $T\geq 0$ is a random integer with a fixed law $\tau$, and independent copies of $T$ are used at all steps. In this setting, runs correspond to successions of steps for which $T=0$, and thus have a geometric distribution with success parameter given by the probability that $T\geq 1$, whereas events for which $T\geq 1$ correspond to preferential relocations. 

Let us sketch an interpretation  of one of  our main results --specifically Theorem \ref{T2}(ii)-- in the framework of territorial exploration. Think of a finite connected graph as a territory and suppose that some sites --possibly all--  are deleterious, that is, for each site, there is a risk to get killed when visiting that place. Consider  two monkeys, the first is named B for benchmark, the second R for relocation.   Suppose that the same sub-stochastic matrix governs the random displacements   of the two monkeys along the edges of the graph; more precisely those of B follow the ordinary killed Markov chain, whereas those of  R also incorporate preferential relocations based on some non-degenerate random variable $T$ with a finite expectation. Imagine now that we observe the monkeys after some large time; most likely both are already  dead. However, conditionally on that one of the two survived, it is much more probable that the survivor is R than B. 
More formally, the central observation of this work is that the persistence rate, which measures the decay of the survival probability over time, is consistently larger for the killed chain with relocations than for the benchmark killed Markov chain without them\footnote{Hence the title that preferential relocations enhance survival.
Let us briefly comment on this claim to avoid a possible misinterpretation. Stochastic resetting typically accelerates the search of a target, in the sense that the average time for the process to find  the target is shorter when resetting is incorporated to the dynamics. The hitting time can be seen as  the lifetime for the search process, since its goal has been accomplished. This does not contradict our results which have a different focus and concern the tail distribution of the lifetime rather than its expectation. }. A weaker version of this observation was established in \cite{BerBer} within the context of a multitype branching process with memory, and the primary objective of the present work is to provide a deeper understanding of this phenomenon. 

Our analysis relies on the fact that dynamics with preferential relocations fall under the scope of chains with complete connections, introduced by Doeblin and Fortet \cite{DF} and extensively studied in ergodic theory. In short, we consider the substochastic version of these processes and apply a Feynman--Kac representation to derive general lower bounds for their persistence rates. An important aspect of this approach is the freedom in choosing the specific conservative chain with complete connections to which the Feynman--Kac formula is applied, similar to the flexibility found in the Donsker--Varadhan large deviation theory for Markov chains. 
 A first qualitative new result in this direction is that, under mild assumptions on the law $\tau$ and/or  the matrix $\sigma$, the inequality for the persistence rates is always strict, except in the case when $\tau$ is a Dirac mass (i.e. except when the random variable $T$ in the preceding paragraph is degenerate). A second one is that this inequality persists even when we restrict the empirical measure of the chain to be in a fixed open set. Furthermore, we establish quantitative results, notably deriving an explicit asymptotic lower bound for the spectral radius of the killed chain with relocations when $\tau$ is highly dispersed. Numerical simulations suggest this asymptotic bound may be sharp.

The remainder of this work is organized as follows. Section \ref{Sec:chainmemo} concerns chains with complete connections, which encompass more general processes than the preferential relocation models of interest here. We recall known criteria for ergodicity in the conservative case before turning to the defective case. Our principal result in this setting is a general lower bound for the persistence rate, stated in Theorem \ref{T1}. In Corollary \ref{C1}, we derive a simple formula for the spectral radius in terms of an optimization problem, which shares similarities with well-known variational formulas in the literature. 
Killed chains with preferential relocations are formally introduced in Section \ref{sec:persisbound}, where some features presented in Section \ref{Sec:chainmemo} are specialized. Section \ref{sec:PF} is devoted to two applications of the Perron--Frobenius theorem. Theorem \ref{T2} states rigorously that preferential relocations enhance persistence, formalizing the discussion above. Theorem \ref{T3} presents a large deviation lower bound for the empirical measures of the killed chain, echoing the celebrated Donsker--Varadhan result. It shows that as the number of steps goes to infinity, the probability for the empirical measure to be found in a given open set is always higher when preferential relocations are incorporated into the evolution. Finally, Section \ref{sec:disperse} addresses asymptotics when the relocation law $\tau$ is widely dispersed. Theorem \ref{T4} asserts a general concentration result for occupation measures of conservative chains, which immediately yields the asymptotic lower bound for the spectral radius stated in Corollary \ref{C3}. Intriguingly, numerical simulations suggest this asymptotic lower bound might actually be a general upper bound valid for all laws $\tau$, though we have yet to find an explanation.

\section{Chains with complete connections and killing} \label{Sec:chainmemo}
\subsection{Ergodicity of chains with complete connections}
The purpose of this section is to recall some important  ergodic properties of certain  Markovian dynamics, known as chains with complete connections,  which will be useful for  our analysis. The  terminology is borrowed from  Doeblin and Fortet \cite{DF}; other names, such as chains with unbounded memory, or with infinite order, or Doeblin chains, are also commonly used in the literature. 

Let $S$ be some finite set; we write $\S=S^{\N}$ for the space of infinite sequences in $S$ and systematically use bold letters to designate objects (functionals, measures, etc.) related to  $\S$. Notably $\s=(s_0, s_1, \ldots)$ denotes a generic element of $\S$, and we then simply write $t\s =(t,s_0, s_1, \ldots)$ for the sequence obtained by adding $t\in S$ as prefix to $\s$.
We endow $\S$ with the distance 
$$\mathbf{d}(\s,\mathbf{t})= 2^{-j} \Longleftrightarrow s_i=t_i \text{ for all }i<j \text{ and } s_j\neq t_j  ,$$
so that $(\S,\mathbf{d})$ is a compact metric space and its topology is that of pointwise convergence.

Next consider a continuous functional $\g: \S\to [0,1]$ with
$$\sum_{t\in S} \g(t\s)=1,\qquad \text{ for all }\s\in \S.$$
This induces a continuous kernel $\s \mapsto \g(\cdot\s)$ from $\S$  to the space $\calP(S)$ of probability measures  on $S$ equipped with the topology of weak convergence,  and  a transition (or transfer) operator $\G$ on the space $\calC(\S)$ of continuous functionals on $\S$ equipped with the supremum norm,
such that  for every $\mathbf{f}\in \calC(\S)$, 
$$ \G\mathbf{f}(\s)=\sum_{t\in S}\g(t\s)\mathbf{f}(t\s).$$
The Feller property holds, that is, $\G\mathbf{f}\in \calC(\S)$ for every  $\mathbf{f}\in \calC(\S)$. Plainly, $\G$ maps the space of positive continuous functionals 
$$\calCp(\S)=\{\mathbf{f}\in \calC(\S): \mathbf{f}(\s)>0\text{ for all }\s\in \S\}$$
 into itself and we call $\G$ positive.

We denote by $\Pp_{\boldsymbol{\nu}}$ the law of the Markov chain $\X=(\X(n))_{n\geq 0}$ on $\S$  with transition operator $\G$ and  initial distribution  $\boldsymbol{\nu}\in\calP(\S)$. We think of $\X$ as a growing  sequence of backward paths in $S$; more precisely if $ \X(n)=(X_0(n), X_1(n), \ldots)$, then  $X_j(n)=X_{i+j}(n+i)$  for any $i,j\geq 0$. The chain keeps full memory of its past, in the sense that for any $n\leq n'$, $\X(n)$ can be recovered from $\X(n')$ as a suffix of the latter;  as a consequence the strong Feller property fails and $\X$ is not a Harris chain.  Analytically, we have that for every $n\geq 0$ and $\mathbf{f}\in \calC(\S)$
$$\Ee_{\boldsymbol{\nu}}\left( \mathbf{f}(\X(n))\right) = \int_{\S} {\boldsymbol{\nu}} (\d \s)\G^n\mathbf{f}(\s) =  \int_{\S} \G^{*n}{\boldsymbol{\nu}} (\d \s)\mathbf{f}(\s),$$
where  $\G^*$ denotes the dual transition operator acting on the space of signed measures on $\S$.

 It is a well-known consequence of the Schauder--Tychonoff fixed point theorem that any Feller chain on a compact space possesses an invariant distribution. In the present setting, there exists $\boldsymbol{\mu} \in \calP(\S)$, which we call, following \cite{BCJO},   a Doeblin measure, such that $\G^*\boldsymbol{\mu}  = \boldsymbol{\mu} $ . One says
 that the kernel $\g$ is uniquely ergodic if the Doeblin measure is unique. One further says  that $\g$ satisfies  dynamic uniqueness if there exists  $\boldsymbol{\mu}\in \calP(\S)$ such that,
for every distribution  $\boldsymbol{\nu}\in\calP(\S)$, 
$$\lim_{n\to \infty} \G^{*n}{\boldsymbol{\nu}} = \boldsymbol{\mu}, \qquad\text{weakly on }\calP(\S).$$
In that case, $\boldsymbol{\mu}$ is a Doeblin measure  and  $\g$ is uniquely ergodic. We refer to Definition 3 and Theorem 1 in \cite{GGT} for equivalent formulations of dynamic uniqueness, notably in terms of coupling, mixing, and triviality of a tail sigma-algebra. 

We now lift from the literature two handy criteria ensuring unique ergodicity; they are both expressed in terms of the variation of the kernel,
$$\mathrm{var}_j(\g)= \sup\{| \g(\s)-\g(\t)|: \mathbf{d}(\s,\t)\leq 2^{-j}\}, \qquad \text{for } j\geq 0.$$
The positivity assumption of the kernel in Theorem \ref{L:ergJOP} below is stronger than topological mixing
in Theorem \ref{L:ergDF}, but the condition on the variation of the kernel is weaker. 
The first basic criterion, tailored for our purpose,  is implicit in \cite{DF}; see also \cite{Keane} and \cite{Walters}.

\begin{theorem}[Doeblin and Fortet] \label{L:ergDF} Assume the kernel $\g$ is  topological mixing, in the sense that for every $\eps>0$, there exists $n=n(\eps)$ such that for 
any two states $\s$ and $\t$ in $\S$,  $$\Pp_{\s}( \mathbf{d}(\X(n),\t)<\eps)>0.$$
If furthermore the kernel $\g$ has summable variation, i.e. 
$$\sum_{n\geq 0}\mathrm{var}_n(\g)<\infty,$$
then dynamical uniqueness, and a fortiori unique ergodicity, hold.
\end{theorem}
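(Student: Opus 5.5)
We will prove dynamic uniqueness by showing that for every $\mathbf{f}\in\calC(\S)$ the oscillation $\mathrm{osc}(\G^n\mathbf{f})=\sup\G^n\mathbf{f}-\inf\G^n\mathbf{f}$ tends to $0$. Then $\G^n\mathbf{f}$ converges uniformly to a constant $\boldsymbol{\mu}(\mathbf{f})$. Indeed, $\sup \G^n\mathbf{f}$ is nonincreasing and $\inf\G^n\mathbf{f}$ is nondecreasing, since $\G$ is a positive operator preserving constants. This gives $\G^{*n}\boldsymbol{\nu}\to\boldsymbol{\mu}$ weakly for every $\boldsymbol{\nu}$, and by density it suffices to treat $\mathbf{f}$ depending on finitely many coordinates, or Lipschitz $\mathbf{f}$.

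\emph{Step 1: bounded distortion.} Write $\g_n(t_1\cdots t_n\s)=\prod_{i=1}^n \g(t_i\cdots t_n\s)$ for the probability of prefixing the word $t_1\cdots t_n$ in $n$ steps. We want to compare $\g_n(w\s)$ with $\g_n(w\s')$ for two tails $\s,\s'$. The plan is to first use topological mixing together with continuity and compactness of $\S$ to get a uniform lower bound $\g\ge$ something on the relevant words. Summable variation then gives $\log\g_n(w\s)-\log\g_n(w\s')\le C\sum_j \mathrm{var}_j(\g)/\delta<\infty$, uniformly in $n,w$, wherever $\g\ge\delta>0$ along the path.

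The difficulty is that $\g$ may vanish. So instead I plan to argue with coupling-type estimates directly. For words $w$ of length $n$ and tails agreeing on the first $k$ coordinates, the factors differ by at most $\mathrm{var}_{k+n-i}(\g)$. Hence the total variation distance between the laws of the first $n$ prefixed letters under $\Pp_{\s}$ and $\Pp_{\s'}$ is at most $\sum_{j\ge k}\mathrm{var}_j(\g)=:r_k$, using the telescoping inequality $|\prod a_i-\prod b_i|$ summed over words. This bound is uniform in $n$, and $r_k\to0$.

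\emph{Step 2: Doeblin minorization via mixing.} Fix $\eps=2^{-k}$ and $n_0=n(\eps)$ from topological mixing. By compactness and the Feller property, the probability in the mixing hypothesis is bounded below uniformly: $\Pp_{\s}(\mathbf{d}(\X(n_0),\t)<\eps)\ge\alpha>0$ for all $\s$, with $\t$ fixed. This holds because $\s\mapsto \Pp_\s(\X(n_0)\in B)$ is continuous for the clopen ball $B$, and it is positive on a compact set. So from any two starting points, with probability at least $\alpha$ each, the chains land in the same cylinder of depth $k$ at time $n_0$.

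\emph{Step 3: contraction.} Couple two chains from $\s,\s'$. Over a block of $n_0$ steps they enter a common depth-$k$ cylinder with probability $\ge\alpha$; at that moment their states share the first $k$ coordinates. By Step 1, the subsequent letters can then be coupled to coincide forever with probability $\ge 1-r_k$. Note that once $m$ common letters are prefixed, the agreement depth only increases. Choosing $k$ with $r_k<1/2$, each attempt succeeds with probability $\ge\alpha/2$, and attempts can be repeated after failures, using Markov property and uniformity in the starting states. The probability that the two chains have not merged to agreement depth $\ge m$ by time $N$ tends to $0$ as $N\to\infty$ for each $m$. For $\mathbf{f}$ depending on $m$ coordinates this yields $|\G^N\mathbf{f}(\s)-\G^N\mathbf{f}(\s')|\to0$ uniformly, hence $\mathrm{osc}(\G^N\mathbf{f})\to0$.

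The main obstacle is Step 3's coupling bookkeeping: making the ``couple forever with probability $\ge 1-r_k$'' statement rigorous. I would do this via a maximal coupling letter by letter, where at step $i$ the mismatch probability is $\le\mathrm{var}_{k+i}(\g)$, so the union bound gives $r_k$. I would also check that failed attempts leave the processes in arbitrary states, to which the uniform bound $\alpha$ still applies.
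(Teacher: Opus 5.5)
The paper gives no proof of this statement. It is quoted as a known criterion, attributed to Doeblin and Fortet with pointers to Keane and Walters, so your proposal has to stand as a self-contained argument. It does, and it is a different route from the cited one.

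Your argument has three pieces:
\begin{enumerate}
\item Summable variation, via a letter-by-letter maximal coupling and a union bound, keeps two chains whose states agree on their first $k$ coordinates glued forever with probability at least $1-\frac{|S|}{2}\sum_{j>k}\mathrm{var}_j(\g)$, uniformly in the states.
\item Topological mixing at the single scale $2^{-k}$ becomes a uniform minorization $\alpha>0$, because cylinders are clopen and $\G$ is Feller.
\item Restarting after failures gives uniform decay of $\mathrm{osc}(\G^N\mathbf{f})$, hence dynamic uniqueness.
\end{enumerate}
The classical route behind the citation is analytic, through transfer-operator estimates or Ruelle's operator theorem for $g$-measures. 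That route yields more structural information (eigenfunctions, Gibbs-type properties). However, it is usually set up for $\log\g$ of summable variation, which forces $\g>0$. Your coupling argument is elementary, uses mixing at only one scale, and tolerates zeros of $\g$. That is exactly what Proposition~\ref{P:chainmem}(i) needs, since $\sigma$ may have null entries there and the kernel \eqref{E:boldpi} can then vanish. You were right to drop the bounded-distortion idea of Step 1.

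A few small repairs are needed, none of them a real gap.
\begin{enumerate}
\item If the two chains run independently during the first $n_0$ steps, both land in the common cylinder with probability at least $\alpha^2$, not $\alpha$. An attempt therefore succeeds with probability at least $\alpha^2(1-r_k)$.
\item The telescoping bound produces the factor $|S|/2$. The correct quantity is $r_k=\frac{|S|}{2}\sum_{j>k}\mathrm{var}_j(\g)$, which does not affect the argument.
\item For $\sup_{\mathbf{s},\mathbf{s}'}\Pp(\text{agreement depth}<m\text{ at time }N)\to0$, counting attempts is not enough. You also need uniform control of how long a failed attempt lasts. The same union bound supplies it: the probability that the first mismatch occurs after more than $\ell$ coupled steps is at most $\frac{|S|}{2}\sum_{j>k+\ell}\mathrm{var}_j(\g)$, uniformly in the starting pair. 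Combined with the geometric bound on the number of attempts, this gives the uniform estimate.
\item The restart scheme is a legitimate coupling. In both the independent and the maximal-coupling mode, each chain's next letter has, given the joint past, the law $\g(\cdot\,\mathbf{x})$ of its own current state $\mathbf{x}$.
\end{enumerate}
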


The second criterion is taken from \cite{JOP}.  
\begin{theorem}[Johansson, \"Oberg, and Pollicott]  \label{L:ergJOP}  Suppose that the continuous kernel  $\g$ is positive, i.e. $\g(\s)>0$ for all $\s\in \S$, and satisfies
$$\mathrm{var}_n(\g)=o(n^{-1/2}), \qquad \text{as }n\to \infty.$$
Then  unique ergodicity holds.
\end{theorem}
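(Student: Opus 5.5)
The plan is to deduce unique ergodicity from a uniqueness statement for a positive eigenfunction of a suitable transfer operator, as in Ruelle--Perron--Frobenius theory for $g$-measures. Unique ergodicity is equivalent to the following: for every $\mathbf{f}\in\calC(\S)$, the Cesàro averages $n^{-1}\sum_{k<n}\G^k\mathbf{f}$ converge uniformly to a constant. Equivalently, any two Doeblin measures coincide. Since the extremal Doeblin measures are ergodic, it suffices to show that two ergodic Doeblin measures $\boldsymbol{\mu}_1,\boldsymbol{\mu}_2$ cannot be mutually singular.

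Following Johansson--\"Oberg--Pollicott, I will couple the two chains started from $\boldsymbol{\mu}_1$ and $\boldsymbol{\mu}_2$. Positivity and continuity on the compact space $\S$ give $\delta:=\min_{\s}\g(\s)>0$. At each step I use the maximal coupling of $\g(\cdot\s)$ and $\g(\cdot\t)$ on the finite set $S$. This lets the two fresh symbols disagree with probability $\tfrac12\sum_t|\g(t\s)-\g(t\t)|$.
\begin{itemize}
\item If the two current sequences agree on their first $j$ coordinates, this probability is at most $C\,\mathrm{var}_j(\g)$, with $C=|S|/2$.
\item Whenever the two sequences disagree (including at the start), positivity lets us force agreement of the next symbol with probability at least $\delta|S|$, or at least $\delta$.
\end{itemize}
Once the chains agree on a run of length $j$, the probability that the agreement is never broken is
$$\prod_{k\ge j}\bigl(1-C\,\mathrm{var}_k(\g)\bigr).$$
Under summable variation this product is positive, and we are done. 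Under the weaker hypothesis $\mathrm{var}_n=o(n^{-1/2})$ the product may vanish, and this is the main obstacle.

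To get past it, I would follow JOP and study the likelihood ratio, or relative entropy, between the two conditional laws along the coupled paths rather than the survival probability. The key quantity is the square-summability $\sum_n \mathrm{var}_n(\g)^2/n^{-1}$, which is finite because $\mathrm{var}_n(\g)^2=o(1/n)$. The goal is an $L^2$ martingale bound of Kakutani type: the Radon--Nikodym martingale $M_n$ between the laws of $\X$ under $\Pp_{\boldsymbol{\mu}_1}$ and $\Pp_{\boldsymbol{\mu}_2}$ on the coupling event has increments whose conditional variances are controlled by $\mathrm{var}_{L_n}(\g)^2$, where $L_n$ is the current agreement length. By Kakutani's dichotomy, $\sum_n \E\,\mathrm{var}_{L_n}(\g)^2<\infty$ gives absolute continuity, hence $\boldsymbol{\mu}_1=\boldsymbol{\mu}_2$ by ergodicity.

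The remaining work is to show that $L_n$ grows essentially linearly in $n$, at least in a Cesàro sense. After each forced agreement, $L_n$ increases by one per step until a failure. The $o(n^{-1/2})$ rate then makes $\sum_n \mathrm{var}_{L_n}^2$ comparable to $\sum_n o(1/n)$ along renewal intervals. Making this renewal/Kakutani bookkeeping rigorous is the delicate step. I would carry it out by bounding the square-root (Hellinger) affinity product $\prod_n \sum_t\sqrt{\g(t\s_n)\g(t\t_n)}$ from below by $\exp\bigl(-c\sum_n \mathrm{var}_{L_n}(\g)^2\bigr)$, using $1-\sum\sqrt{pq}\le \tfrac12\sum(\sqrt p-\sqrt q)^2\le \sum(p-q)^2/(8\delta)$. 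The $1/\delta$ factor is exactly where positivity is needed.
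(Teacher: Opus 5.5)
The paper gives no proof of this statement; it quotes it from \cite{JOP}. Your argument therefore has to stand on its own, and it has a genuine gap exactly where the hypothesis $\mathrm{var}_n(\g)=o(n^{-1/2})$ is supposed to enter. You claim the relevant series is finite ``because $\mathrm{var}_n(\g)^2=o(1/n)$''. This is false whether you mean $\sum_n n\,\mathrm{var}_n(\g)^2$ or $\sum_n \mathrm{var}_n(\g)^2$: terms that are $o(1)$, resp.\ $o(1/n)$, need not be summable. Take $\mathrm{var}_n(\g)\asymp (n\log n)^{-1/2}$. Hence your lower bound $\exp\bigl(-c\sum_n \mathrm{var}_{L_n}(\g)^2\bigr)$ for the Hellinger affinity is $0$ in general, even in the most favourable case $L_n\geq n$. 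That case is in fact automatic in the correct Kakutani-type setting, where you compare, along the same sequence of new symbols, the laws of the chain started from two fixed pasts $\s,\t$. By contrast, the laws of $\X$ under $\Pp_{\boldsymbol{\mu}_1}$ and $\Pp_{\boldsymbol{\mu}_2}$ are already singular at time $0$ when $\boldsymbol{\mu}_1\perp\boldsymbol{\mu}_2$, so a likelihood ratio between them tells you nothing. With your coupling it is worse: the regime you must handle is precisely the one where agreement is broken infinitely often, and each time $L_n$ is small contributes a term of order one. So this route only recovers the square-summable case of \cite{JO1}. Absolute continuity of whole path laws is the wrong target under the weaker hypothesis, and the ``renewal/Kakutani bookkeeping'' cannot be completed as stated.

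What is missing is twofold. First, apply the Hellinger estimate only over blocks whose length matches the current agreement length. Second, replace ``agreement forever'' by a renewal argument in the spirit of Berbee. Suppose $\s,\t$ agree on their first $m$ coordinates. Along any common continuation of length at most $m$, the two conditional laws differ entrywise by at most $\mathrm{var}_m(\g)$, since the variation is nonincreasing. So the Hellinger affinity between the laws of the next $m$ symbols is at least $(1-c\,\mathrm{var}_m(\g)^2)^m$ with $c=|S|/(8\delta)$. Their total variation distance is then at most $\epsilon_m:=\sqrt{2cm}\,\mathrm{var}_m(\g)$, and this is where $o(n^{-1/2})$ is used: $\epsilon_m\to0$. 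A maximal coupling of these block laws doubles the agreement length with failure probability at most $\epsilon_m$. After a failure, positivity rebuilds agreement of length $m_0$ with probability at least $(|S|\delta)^{m_0}$. If $\epsilon_m\leq\eta<1/2$ for $m\geq m_0$, climbing $j$ doubling levels has probability at least $(1-\eta)^j$ and lasts of order $2^j m_0$ steps, so the renewal cycles have infinite mean. Renewal theory then shows that, for each $\ell$, the expected fraction of times $n<N$ at which the two copies agree on fewer than $\ell$ coordinates tends to $0$. The time lost inside a failed block at level $m$ is controlled because that failure has probability at most $\epsilon_m$. This gives $N^{-1}\sum_{n<N}\bigl(\G^n\mathbf{f}(\s)-\G^n\mathbf{f}(\t)\bigr)\to0$ for all $\s,\t$ and $\mathbf{f}\in\calC(\S)$. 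Integrating against two Doeblin measures then shows they coincide, with no absolute continuity of path laws needed.
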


We mention that  \cite{BCJO} recently established a weaker sufficient condition for unique ergodicity. We also refer  to \cite{BK, BHS} for cases where unique ergodicity fails.
Further,  \cite{JO1}  proved first that the stronger assumption of  square summability of the variation of the kernel ensures unique ergodicity, and then
\cite{JO2} and \cite{GGT} that this stronger  condition also ensures dynamic uniqueness.

\subsection{Persistence bounds for killed chains with complete connections } \label{Sec:kcomplconn}
Killed chains with complete connections  arise for defective  (i.e. sub-stochastic) kernels. Specifically, we 
 consider here a continuous functional $\kk: \S\to [0,1]$ with 
$$\sum_{t\in S} \kk(t\s)\leq 1,\qquad \text{ for all }\s\in \S.$$
Just as in the preceding section, $\kk$ yields a positive Feller transition operator $\Kk$ on $\calC(\S)$,
where  for every $\mathbf{f}\in \calC(\S)$, 
$$ \Kk\mathbf{f}(\s)=\sum_{t\in S}\kk(t\s)\mathbf{f}(t\s).$$

The transition operator $\Kk$ is now only sub-Markovian; it  induces a defective Markov chain on $\S$, which we refer to as a killed chain with complete connections. 
Its asymptotic behavior is depicted by an extension of the Perron--Frobenius theorem  due to Ruelle \cite{Ruelle}, whenever the defective kernel $\kk$ is positive and H\"older-continuous. See \cite[Theorem 1]{FJ} or \cite[Theorem 8]{Walters2} for versions that encompass the present setting.

Write $\mathcal{H}_{>0}(\S)$ for the space of H\"older-continuous functionals $\a: \S\to (0,\infty)$,  that is,
$$|\a(\s)-\a(\t)| \leq c\, \mathbf{d}(\s, \t)^{\alpha},\qquad \text{for all }\s,\t\in \S,$$
for some $c<\infty$ and $\alpha>0$. 
Recall that the spectral radius $\mathbf{r}$ of the transition operator $\Kk$ is given   in terms of the operator norm $\|\cdot\|$ on the Banach algebra $\calC(\S)$  by the Gelfand formula, 
$$\mathbf{r} = \lim_{n\to\infty} \| \Kk^n\|^{1/n}.$$

\begin{theorem}[Ruelle] \label{L:Ruelle} Suppose that the defective kernel $\kk\in \mathcal{H}_{>0}(\S)$. Then there exist a unique probability measure $\boldsymbol{\mu}\in \calP(\S)$ --often called the quasi-stationary distribution-- and a unique functional
$\mathbf{h}\in \mathcal{H}_{>0}(\S)$  such that
\[\Kk \mathbf{h}= \mathbf{r}  \mathbf{h}, \quad \Kk^{*} \boldsymbol{\mu}= \mathbf{r}  \boldsymbol{\mu}, \quad \text{and} \quad \int_{\S}\boldsymbol{\mu}(\d \s) \mathbf{h}(\s)=1.
\]
Moreover, for any $\mathbf{f}\in \calC(\S)$, there is the convergence in $\calC(\S)$
\[ \lim_{n\to \infty} \mathbf{r}^{-n}\Kk^n \mathbf{f} =   \boldsymbol{\mu}( \mathbf{f}) \mathbf{h} , \qquad \text{where} \  \boldsymbol{\mu}( \mathbf{f})=\int_{\S}\boldsymbol{\mu}(\d \s) \mathbf{f}(\s).
\]
\end{theorem}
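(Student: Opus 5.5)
Ruelle's theorem is cited from the literature (\cite{FJ}, \cite{Walters2}), so I will only sketch how I would rederive it in the present setting. The plan is the classical Ruelle--Perron--Frobenius route: build the eigenmeasure $\boldsymbol{\mu}$ by a fixed point argument, build $\mathbf{h}$ from a cone of log-Hölder functionals, and obtain convergence by a contraction in the Hilbert projective metric. The key structural fact is that $\S$ with the prefix map $\s\mapsto t\s$ is a full one-sided shift, and $\Kk$ is a transfer operator with positive Hölder weight $\kk$. Since $\kk$ is positive on a compact space, it is bounded below by some $c>0$. Hölder continuity then makes $\log\kk$ Hölder, so $|\log\kk(t\s)-\log\kk(t\t)|\le C\,\mathbf{d}(\s,\t)^\alpha$.

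\emph{Eigenmeasure.} Consider the map $\boldsymbol{\nu}\mapsto \Kk^*\boldsymbol{\nu}/(\Kk^*\boldsymbol{\nu})(\mathbf{1})$ on $\calP(\S)$. It is well defined because $\Kk\mathbf{1}\ge c>0$, and it is weakly continuous by the Feller property. Schauder--Tychonoff gives a fixed point $\boldsymbol{\mu}$ with $\Kk^*\boldsymbol{\mu}=\lambda\boldsymbol{\mu}$ for some $\lambda>0$. I will identify $\lambda$ with $\mathbf{r}$ once $\mathbf{h}$ is constructed, since $\|\Kk^n\|=\|\Kk^n\mathbf{1}\|$ and $\Kk^n\mathbf{1}$ is comparable to $\mathbf{h}$.

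\emph{Bounded distortion and cone.} Since $\mathbf{d}(u\s,u\t)=2^{-n}\mathbf{d}(\s,\t)$ for words $u$ of length $n$, summing the geometric series gives a uniform bound on the distortion of $n$-step weights:
$$\prod_{i}\kk(\cdots u\s)\big/\prod_i\kk(\cdots u\t)\le \exp\bigl(C'\mathbf{d}(\s,\t)^\alpha\bigr).$$
Let $\Lambda_B$ be the set of positive $\mathbf{f}$ with $\mathbf{f}(\s)\le \mathbf{f}(\t)\exp(B\,\mathbf{d}(\s,\t)^\alpha)$. For $B$ large, $\lambda^{-1}\Kk$ maps $\Lambda_B$ into $\Lambda_{\theta B+C}\subset\Lambda_B$ with $\theta=2^{-\alpha}<1$. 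Normalised elements of this cone (with $\boldsymbol{\mu}(\mathbf{f})=1$) are uniformly bounded above and below, because $\mathbf{d}\le1$. They are also equicontinuous, so Arzelà--Ascoli combined with Schauder (or Cesàro averages of $\lambda^{-n}\Kk^n\mathbf{1}$) yields $\mathbf{h}\in\Lambda_B$ with $\Kk\mathbf{h}=\lambda\mathbf{h}$ and $\boldsymbol{\mu}(\mathbf{h})=1$. This $\mathbf{h}$ is Hölder and positive. Then $\min\mathbf{h}\,\Kk^n\mathbf{1}\le\Kk^n\mathbf{h}=\lambda^n\mathbf{h}$, and the reverse inequality holds similarly, which gives $\lambda=\mathbf{r}$ by the Gelfand formula.

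\emph{Convergence and uniqueness.} This is the main obstacle, and I would handle it with Birkhoff's contraction. The image of $\Lambda_B$ under a single application of $\Kk$ lies in $\Lambda_{\theta B+C}$, which has finite projective diameter in $\Lambda_B$, so $\mathbf{r}^{-1}\Kk$ is a strict contraction in the Hilbert metric. It follows that $\mathbf{r}^{-n}\Kk^n\mathbf{f}\to\boldsymbol{\mu}(\mathbf{f})\mathbf{h}$ uniformly and exponentially fast for $\mathbf{f}\in\Lambda_B$. To extend this, note that any Hölder $\mathbf{f}$ is a difference of two cone elements, namely $\mathbf{f}+M$ and $M$ for $M$ large. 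Hölder functionals are dense in $\calC(\S)$ and $\sup_n\|\mathbf{r}^{-n}\Kk^n\|<\infty$, so the convergence extends to all of $\calC(\S)$.

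Uniqueness follows from this convergence. If $\Kk^*\boldsymbol{\mu}'=\mathbf{r}\boldsymbol{\mu}'$, integrating the limit against $\boldsymbol{\mu}'$ gives $\boldsymbol{\mu}'(\mathbf{f})=\boldsymbol{\mu}(\mathbf{f})\boldsymbol{\mu}'(\mathbf{h})$, so $\boldsymbol{\mu}'$ is a multiple of $\boldsymbol{\mu}$, hence equal to it after normalisation. Similarly, any positive eigenfunction $\mathbf{h}'$ satisfies $\mathbf{h}'=\boldsymbol{\mu}(\mathbf{h}')\mathbf{h}$, which is $\mathbf{h}$ after normalisation.
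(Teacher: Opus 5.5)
The paper does not prove this theorem. It imports it from the literature, citing \cite[Theorem 1]{FJ} and \cite[Theorem 8]{Walters2} as versions that encompass the present setting, so there is no in-paper argument to match.

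Your sketch is the standard self-contained proof for H\"older weights, and it is correct:
\begin{itemize}
\item the normalised dual map is well defined because $\Kk\mathbf{1}$ is bounded below, and it is weakly continuous by the Feller property;
\item the inclusion $\Kk\Lambda_B\subset\Lambda_{2^{-\alpha}B+C}$ holds for every pair $\s,\t$, including $s_0\neq t_0$, because $\mathbf{d}(t\s,t\t)=\tfrac12\mathbf{d}(\s,\t)$;
\item $\lambda^{-1}\Kk$ preserves the compact convex set $\{\mathbf{f}\in\Lambda_B:\boldsymbol{\mu}(\mathbf{f})=1\}$;
\item a subcone $\Lambda_{\nu B}$ with $\nu<1$ does have finite Hilbert diameter in $\Lambda_B$, so Birkhoff's contraction applies.
\end{itemize}

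Two steps you left implicit deserve a line each.
\begin{itemize}
\item Passing from projective to uniform convergence. Use that $c_-\mathbf{f}\le\mathbf{g}\le c_+\mathbf{f}$ pointwise, where $\Theta=\log(c_+/c_-)$ is the Hilbert distance in $\Lambda_B$. The common normalisation $\boldsymbol{\mu}(\mathbf{f})=\boldsymbol{\mu}(\mathbf{g})$ forces $c_-\le1\le c_+$, hence $\|\mathbf{g}-\mathbf{f}\|\le(\e^{\Theta}-1)\|\mathbf{f}\|$.
\item The decomposition $\mathbf{f}=(\mathbf{f}+M)-M$ with $\mathbf{f}+M\in\Lambda_B$. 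This requires $\mathbf{f}$ to be H\"older with the cone's exponent $\alpha$. That is harmless, since locally constant functionals are already dense in $\calC(\S)$.
\end{itemize}

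As for what each route buys: your argument gives more than is stated, namely exponentially fast convergence for H\"older $\mathbf{f}$. It is, however, tied to H\"older regularity of $\kk$, whereas the cited versions are stated in greater generality. The paper only uses the qualitative conclusions: H\"older regularity of $\mathbf{h}$ in Corollary~\ref{C1}, and $\P_{\s}(\z>n)\sim\mathbf{r}^n\mathbf{h}(\s)$. Either source therefore suffices.
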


We denote the law of the killed chain with complete connections started with initial distribution $\boldsymbol{\nu}\in \calP(\S)$ by   $\P_{\boldsymbol{\nu}}$ (or simply $\P_{\s}$ when
$\boldsymbol{\nu}$ is the Dirac mass at $\s$), 
and its lifetime by $\z$.
We are interested in the asymptotic behavior  of the survival probability, also called persistence,  $\P_{\boldsymbol{\nu}}(\z>n)$ as $n\to \infty$.
Plainly, one has always
\[\limsup_{n\to \infty} \frac{1}{n} \log \P_{\boldsymbol{\nu}}(\z>n) \leq \log \mathbf{r}, \]
and if the assumptions in Theorem \ref{L:Ruelle} hold, then
\[\P_{\s}(\z>n)=\Kk^n\mathbf{1}(\s) \sim \mathbf{r}^{n}\mathbf{h}(\s).\]
In words, the persistence rate may depend on the initial distribution, it is always dominated by the spectral radius, and may also coincide the latter, notably when the conclusions of the Perron--Frobenius theorem hold.
We now 
state the key result of this section.

\begin{theorem} \label{T1}  Consider an arbitrary $\a\in \calCp(\S)$ such that
$\Kk\a(\s)>0$ for all $\s\in\S$.
Introduce the conservative biased kernel $\g$ given for any $t\in S$ and $\s\in \S$ by 
\begin{equation} \label{E.defgk}
\g(t\s) = \frac{ \kk(t\s) \a(t\s)}{\Kk \a(\s)}.
\end{equation}
Suppose that $\g$ is uniquely ergodic and write  $ \boldsymbol{\mu}_{\a}$ for its  Doeblin measure. Then for 
any
probability measure $\boldsymbol{\nu}$ on $\S$,
 there is the lower bound
 $$\liminf_{n\to \infty} \frac{1}{n} \log \P_{\boldsymbol{\nu}}(\z>n) \geq \int_{\S} \boldsymbol{\mu}_{\a}(\d \s) \log\left( \frac{ \Kk \a(\s)}{{\a}(\s)}\right) .
$$
 \end{theorem}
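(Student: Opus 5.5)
The plan is a Feynman--Kac change of measure (a Doob-type $h$-transform with the non-harmonic function $\a$), followed by Jensen's inequality and the ergodic theorem for the biased conservative chain. Write $\Pp_{\boldsymbol{\nu}}$ for the law of the conservative chain $\X$ driven by $\g$ from \eqref{E.defgk}. For a path $\s=\X(0)$, $t_1\X(0)=\X(1)$, and so on, iterating \eqref{E.defgk} gives $\kk(\X(i+1))=\g(\X(i+1))\,\Kk\a(\X(i))/\a(\X(i+1))$. Summing over paths then yields the exact identity
\[
\P_{\s}(\z>n)=\Kk^n\mathbf{1}(\s)=\Ee_{\s}\left(\prod_{i=0}^{n-1}\frac{\Kk\a(\X(i))}{\a(\X(i+1))}\right)
=\Ee_{\s}\left(\frac{\a(\X(0))}{\a(\X(n))}\exp\Bigl(\sum_{i=0}^{n-1}\mathbf{v}(\X(i))\Bigr)\right),
\]
with $\mathbf{v}=\log(\Kk\a/\a)\in\calC(\S)$. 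This uses the hypotheses $\a>0$ and $\Kk\a>0$, and compactness guarantees that $\a$ and $\Kk\a$ are bounded above and away from zero. The prefactor $\a(\X(0))/\a(\X(n))$ therefore lies between two positive constants and is negligible at exponential scale. It suffices to treat the Dirac starting points $\boldsymbol{\nu}=\delta_\s$ uniformly in $\s$, because $\P_{\boldsymbol{\nu}}(\z>n)=\int\boldsymbol{\nu}(\d\s)\P_\s(\z>n)$.

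Next I apply Jensen's inequality:
\[
\log\P_\s(\z>n)\ge -c+\Ee_\s\Bigl(\sum_{i=0}^{n-1}\mathbf{v}(\X(i))\Bigr)=-c+\sum_{i=0}^{n-1}\G^i\mathbf{v}(\s).
\]
It thus remains to show that the Cesàro averages $\frac1n\sum_{i<n}\G^i\mathbf{v}$ converge to $\boldsymbol{\mu}_\a(\mathbf{v})$, at least in liminf and uniformly in $\s$. The liminf is all we need.

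This Cesàro step is the main obstacle, because we only assume unique ergodicity and not dynamic uniqueness. The plan is the standard Krylov--Bogolyubov argument. Fix $\s$ and set $\boldsymbol{\nu}_n=\frac1n\sum_{i<n}\G^{*i}\delta_\s$. Any weak limit point $\boldsymbol{\nu}_\infty$ of $(\boldsymbol{\nu}_n)$ satisfies $\G^*\boldsymbol{\nu}_\infty=\boldsymbol{\nu}_\infty$: indeed $\|\G^*\boldsymbol{\nu}_n-\boldsymbol{\nu}_n\|\le 2/n$, and $\G^*$ is weakly continuous by the Feller property. Unique ergodicity then forces $\boldsymbol{\nu}_\infty=\boldsymbol{\mu}_\a$, and by compactness of $\calP(\S)$ the whole sequence converges, so $\boldsymbol{\nu}_n(\mathbf{v})\to\boldsymbol{\mu}_\a(\mathbf{v})$.

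For uniformity in $\s$ I rerun the same argument with arbitrary sequences $\s_n$, replacing $\delta_\s$ by $\delta_{\s_n}$; the limit points are still invariant, hence equal to $\boldsymbol{\mu}_\a$. Hence $\sup_\s\bigl|\frac1n\sum_{i<n}\G^i\mathbf{v}(\s)-\boldsymbol{\mu}_\a(\mathbf{v})\bigr|\to0$.

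Finally, combining the three steps gives $\frac1n\log\P_\s(\z>n)\ge -c/n+\boldsymbol{\mu}_\a(\mathbf{v})-o(1)$ uniformly in $\s$. Integrating against $\boldsymbol{\nu}$ and taking the liminf yields the stated bound.
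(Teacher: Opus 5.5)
Your proof is correct. Its skeleton matches the paper's: the Feynman--Kac identity $\P_{\boldsymbol{\nu}}(\z>n)=\Ee_{\boldsymbol{\nu}}\bigl(\prod_{j=1}^{n}\Kk\a(\X(j-1))/\a(\X(j))\bigr)$ for the biased conservative chain, followed by Jensen's inequality. Where you differ is in how you obtain the ergodic limit.

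\textbf{The paper's route.} It applies Jensen directly under $\Pp_{\boldsymbol{\nu}}$. It then invokes Breiman's strong law of large numbers for Feller chains on a compact space, so that unique ergodicity gives $\Pp_{\boldsymbol{\nu}}$-almost-sure convergence of the pathwise averages of $\log\bigl(\Kk\a(\X(j-1))/\a(\X(j))\bigr)$. It concludes by dominated convergence.

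\textbf{Your route.} You first telescope the $\a$-factors into the bounded prefactor $\a(\X(0))/\a(\X(n))$. After Jensen, only the deterministic Ces\`aro averages $\frac1n\sum_{i<n}\G^i\mathbf{v}$ then remain. You prove their uniform convergence to $\boldsymbol{\mu}_{\a}(\mathbf{v})$ by a Krylov--Bogolyubov compactness argument, using only the Feller property and uniqueness of the Doeblin measure.

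\textbf{Comparison.} Your route is a genuine simplification. Once Jensen has reduced everything to expectations, an almost-sure law of large numbers is more than is needed, and your argument is self-contained. Breiman's theorem is essentially the pathwise analogue of what you do, with a martingale law of large numbers replacing your trivial bound $\|\G^{*}\boldsymbol{\nu}_n-\boldsymbol{\nu}_n\|\le 2/n$. As a bonus, you get the estimate uniformly in the starting point, namely $\inf_{\s}\frac1n\log\P_{\s}(\z>n)\ge\boldsymbol{\mu}_{\a}(\mathbf{v})-o(1)$, which is slightly stronger than the statement. The paper's version is shorter on the page only because it hands the ergodic step to a citation.
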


\begin{proof} 
We stress from the positive Feller property that  \eqref{E.defgk} defines a continuous kernel.
Next, observe that for every $\mathbf{f}\in \calC(\S)$, there is  the identity
$$\Kk\mathbf{f} = \G\mathbf{\tilde f}, \qquad\text{for } \mathbf{\tilde f}(t\s)= \mathbf{f}(t\s) \frac{\Kk \a(\s)}{{\a}(t\s)}.$$
By iteration, we get the Feynman--Kac formula 
$$
 \int_{\S} \boldsymbol{\nu}(\d \s) \Kk^n\mathbf{f}(\s) =\Ee_{\boldsymbol{\nu}}\left( \mathbf{f}(\X(n)) \prod_{j=1}^{n} \frac{\Kk \a(\X(j-1))}{{\a}(\X(j))}\right),
$$
where $\X=(\X(n))_{n\geq 0}$ is the chain with complete connections induced by the biased kernel $\g$.
We write $\mathbf{1}$ for the function identical to $1$ on $\S$, so that 
\begin{equation} \label{E:FK}
 \P_{\boldsymbol{\nu}}(\z>n) =   \int_{\S} \boldsymbol{\nu}(\d \s) \Kk^n \mathbf 1 (\s) = \Ee_{\boldsymbol{\nu}}\left( \prod_{j=1}^{n} \frac{\Kk \a(\X(j-1))}{{\a}(\X(j))}\right).
 \end{equation}
 Then by Jensen's inequality, 
  $$\liminf_{n\to \infty} \frac{1}{n} \log  \P_{\boldsymbol{\nu}}(\z>n) \geq \liminf_{n\to \infty}  \Ee_{\boldsymbol{\nu}}\left( \frac{1}{n} \sum_{j=1}^{n} \log\left(\frac{\Kk \a(\X(j-1))}{{\a}(\X(j))}\right)\right).
  $$
 
The functionals $\log \Kk \a$ and $\log \a$ are continuous on the compact metric space $\S$, and by an application of Breiman's strong law of large numbers for Feller Markov chains \cite{Breiman}, unique ergodicity ensures the $\Pp_{\boldsymbol{\nu}}$-almost-sure convergence
$$\lim_{n\to \infty}   \frac{1}{n} \sum_{j=1}^{n} \log\left(\frac{\Kk \a(\X(j-1))}{{\a}(\X(j))}\right) = \int_{\S} \boldsymbol{\mu}_{\a}(\d \s) \log\left( \frac{ \Kk \a(\s)}{{\a}(\s)}\right).$$
We finish the proof with Fatou's lemma, or Lebesgue's dominated convergence. 
\end{proof}

We next point at a simple consequence of Theorem \ref{T1} for the spectral radius $\mathbf{r}$ of  $\Kk$ for a H\"older continuous kernel.

\begin{corollary} \label{C1}
Suppose that the defective kernel $\kk$  belongs to $\mathcal{H}_{>0}(\S)$. Then for any $\a\in \mathcal{H}_{>0}(\S)$,  the conservative biased  kernel $\g$ defined in \eqref{E.defgk} is uniquely ergodic. We write $\boldsymbol{\mu}_{\a}$ for its Doeblin measure and 
have the identity
\[
\log \mathbf{r} = \sup_{\a\in \mathcal{H}_{>0}(\S)} \int_{\S} \boldsymbol{\mu}_{\a}(\d \s) \log\left( \frac{ \Kk \a(\s)}{\a(\s)}\right).
\]
\end{corollary}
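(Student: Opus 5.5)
The proof has three steps: unique ergodicity of the biased kernel, the inequality "$\geq$" from Theorem \ref{T1}, and the reverse inequality "$\leq$" by an exact computation based on Ruelle's eigenfunction.

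\textbf{Unique ergodicity.} Fix $\a\in \mathcal{H}_{>0}(\S)$. Since $\kk$ and $\a$ are positive and H\"older-continuous on the compact space $\S$, they are bounded away from $0$ and $\infty$. The function $\Kk\a(\s)=\sum_t \kk(t\s)\a(t\s)$ is then also positive and H\"older-continuous, because the map $\s\mapsto t\s$ is a contraction for $\mathbf{d}$. Hence the biased kernel $\g$ in \eqref{E.defgk} is a product and quotient of positive H\"older functions, so it is positive and H\"older. Its variation is therefore geometrically small, $\mathrm{var}_n(\g)\leq c2^{-\alpha n}=o(n^{-1/2})$. Theorem \ref{L:ergJOP} then gives unique ergodicity, so $\boldsymbol{\mu}_{\a}$ is well defined. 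Alternatively, Theorem \ref{L:ergDF} applies: positivity of $\g$ gives topological mixing with $n(\eps)$ chosen so that $2^{-n}<\eps$, and the variation is summable.

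\textbf{The inequality "$\geq$".} Take $\boldsymbol{\nu}$ to be any probability measure. Theorem \ref{T1} gives $\liminf \frac1n\log\P_{\boldsymbol{\nu}}(\z>n)\geq \int \boldsymbol{\mu}_{\a}\log(\Kk\a/\a)$. We also have $\P_{\boldsymbol{\nu}}(\z>n)\leq \|\Kk^n\|$, since the operator norm of the positive operator $\Kk^n$ equals $\|\Kk^n\mathbf 1\|_\infty$. By the Gelfand formula the left-hand side is therefore at most $\log \mathbf r$. Taking the supremum over $\a$ yields "$\geq$" in the identity.

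\textbf{The inequality "$\leq$", attained by $\a=\mathbf h$.} Choose $\a=\mathbf h$, the eigenfunction from Theorem \ref{L:Ruelle}, which lies in $\mathcal{H}_{>0}(\S)$. Then $\Kk\mathbf h/\mathbf h\equiv \mathbf r$. For any probability measure, in particular $\boldsymbol{\mu}_{\mathbf h}$, the integral equals $\log\mathbf r$, so the supremum is attained. Note that we never need $\mathbf r>0$ separately: $\Kk\mathbf h>0$ because $\kk>0$, hence $\mathbf r>0$.

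The only genuinely delicate point is the first step: checking that the biased kernel inherits positivity and H\"older regularity, including the H\"older continuity of $\Kk\a$ and the lower bounds on the denominators. Once that holds, the criteria of the previous subsection apply directly.
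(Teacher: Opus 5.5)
Your proof is correct and follows the paper's argument: you get unique ergodicity of $\g$ from Theorem \ref{L:ergJOP}, the bound $\log\mathbf r \geq \sup_{\a}\int \boldsymbol{\mu}_{\a}(\d\s)\log(\Kk\a(\s)/\a(\s))$ from Theorem \ref{T1} together with the Gelfand formula, and equality by taking $\a=\mathbf h$ from Ruelle's theorem. You also spell out details the paper leaves implicit, namely the H\"older regularity and positivity of $\Kk\a$ and $\g$, and the identity $\|\Kk^n\|=\|\Kk^n\mathbf 1\|_\infty$ for the positive operator $\Kk^n$.
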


\begin{proof} 
On the one hand,  the Gelfand formula can be formulated in terms of persistence rates as
$$\log \mathbf{r} = \lim_{n\to\infty} \sup_{{\boldsymbol{\nu}\in \calP(\S)}} \frac{1}{n} \log \P_{\boldsymbol{\nu}}(\z>n) \geq  \sup_{{\boldsymbol{\nu}\in \calP(\S)}} \liminf_{n\to\infty} \frac{1}{n} \log \P_{\boldsymbol{\nu}}(\z>n).$$
It readily follows from our assumptions that $\g$ fulfills the requirement of Theorem \ref{L:ergJOP} and the biased chain   is
 uniquely ergodic. Thus 
Theorem \ref{T1} yields the inequality
\begin{equation} \label{E:specradrel}
\log \mathbf{r} \geq \sup_{\a\in \mathcal{H}_{>0}(\S)} \int_{\S} \boldsymbol{\mu}_{\a}(\d \s) \log\left( \frac{ \Kk \a(\s)}{\a(\s)}\right).
\end{equation}

On the other hand, we invoke Ruelle's Theorem  \ref{L:Ruelle}. Recall that the eigenfunction $\mathbf{h}$ there is H\"older-continuous, so we can take $\mathbf{a}=\mathbf{h}$ in the right-hand side of \eqref{E:specradrel} and the converse inequality follows.
\end{proof}

Corollary \ref{C1} bears some resemblance with the Donsker--Varadhan--Friedland variational formula for the spectral radius, which reads in our setting 
\[
\log \mathbf{r} = \sup_{\boldsymbol{\nu}\in \calP(\S)} \inf_{\a\in \calCp(\S)} \int_{\S} \boldsymbol{\nu}(\d \s) \log\left( \frac{ \Kk \a(\s)}{\a(\s)}\right).
\]
We refer to \cite{Fried} for further variational formulas for positive operators that fulfill the conclusions of the Perron--Frobenius theorem.

\section{Persistence bounds for  killed chains with relocations} \label{sec:persisbound}
Killed chains with preferential relocations  are the main objects of interest in the rest of this work. They were described informally in the Introduction, and we now introduce them formally as a natural two-parameter family of chains with complete connections, where the first parameter is a sub-stochastic matrix $\sigma$ driving ordinary transitions, and the second is a law $\tau$ on $\N$ governing preferential relocations.  We are mainly interested in their persistence rates and will rephrase in this setting the general lower bound obtained in Theorem \ref{T1}. This requires unique ergodicity for  chains with weighted relocations that arise in this setting.

Let $\sigma=(\sigma(s,t))_{s,t\in S}$ be a sub-stochastic matrix, i.e. the entries are nonnegative  with 
$$\sum_{t\in S} \sigma(s,t)\leq 1,\qquad \text{ for every }s\in S;$$ 
we further assume $\sigma$ to be irreducible and aperiodic. So $\sigma$ drives a Markov chain with killing, where for every $s\in S$, the probability that the chain located at $s$ being killed at its next step equals $1 - \sum_{t\in S} \sigma(s,t)$.
We always suppose that $\sigma$ is not proportional to a stochastic matrix, to ensure that the killing probabilities are not the same at all states $s$, as otherwise the questions we are interested in would be trivial. 

We modify the evolution of the sub-Markov chain and make it depend on its past using further a probability law $\tau$ on $\N$, which we call the relocation law.
That is, in the framework of Section \ref{Sec:kcomplconn}, we consider the kernel $\kk$ on $\S$ given by 
$$ \kk (t\s)=  \sum_{i=0}^{\infty} \tau(i)  \sigma(s_i,t), \qquad \text{for }\s\in \S \text{ and }t\in S.$$
In words, the dynamics are that of a walk with memory in $S$ such that at each step, with probability $\tau(0)$, the transition is made according to $\sigma$, and with complementary probability $1-\tau(0)$, we rather relocate the walk at its state $R$ units of time before and then make instantaneously another transition  from that state according to $\sigma$, where $R\geq 1$ is random with distribution $
\left(\tau(j)/(1-\tau(0))\right)_{j\geq 1}$.  Relocations are preferential, in the sense that they depend on the past trajectory, as opposed to different deterministic resetting where the walk is rather sent back to a fixed location. 
We thus call  the killed chain with complete connections in Section \ref{Sec:kcomplconn} for the defective kernel $\kk$, the killed chain with relocations (for the sub-stochastic matrix $\sigma$ and the law $\tau$). 

Recall that the law of  the killed chain with relocations with initial law $\boldsymbol{\nu}\in \calP(\S)$ is denoted by $\P_{\boldsymbol{\nu}}$,  and that
$\z$ stands for its lifetime.
We are chiefly interested in the asymptotic behavior as $n\to \infty$ of the survival probability $\P_{\boldsymbol{\nu}}(\z>n)$
 and  want to compare it to the benchmark model without relocations. This requires the introduction of (conservative)
 chains with weighted relocations.

Fix some arbitrary positive function $a\in (0,\infty)^S$ that we view as a column vector  and define the stochastic matrix $\pi=(\pi(s,t))_{s,t\in S}$ with biased entries 
$$\pi(s,t)= \frac{{a}(t)}{\sigma a(s)}\sigma(s,t)  .$$
Beware that $\pi$ depends on the choice of the function ${a}$,  even though for simplicity  this does not appear in the notation.
Next recall \eqref{E.defgk}, which can be rewritten in the present setting for a functional  of zero order $\a(\s)=a(s_0)$ as 
$$ \Kk a(\s)=\sum_{i=0} ^{\infty}\tau(i) \sigma a(s_i)=\sum_{i=0}^{\infty}\sum_{t\in S} \tau(i) \sigma(s_i,t){a}(t), \qquad \text{for }\s\in \S,$$
and then the biased kernel  $\g$ is
\begin{equation} \label{E:boldpi}
\g(t\s)=\frac{1}{\Kk a(\s)}  \sum_{i=0}^{\infty} \tau(i) \sigma(s_i,t) {a}(t) =\frac{1}{\Kk a(\s)}  \sum_{i=0}^{\infty} \tau(i) \sigma a(s_i) \pi(s_i,t)   .
\end{equation}
We call 
the chain with complete connections  and kernel  $\g$, the  chain with weighted relocations (with parameters the sub-stochastic matrix $\sigma$, the law $\tau$, and the function $a$). 
The adjective weighted refers to the second expression in  \eqref{E:boldpi}, which can be interpreted as incorporating relocations to the (conservative)   Markovian dynamics driven by $\pi$, where now relocations depend not only on the law $\tau$, but also on  weights induced by $\sigma a$.

We aim at applying the general lower bound for the persistence rate of Theorem \ref{T1} and have to check that the assumptions there are fulfilled in the present setting.
That $\Kk a(\s)>0$  for all $\s$ is immediate by irreducibility of $\sigma$.   The continuity of $\kk$ is plain; beware however that positivity may fail when some entries of $\sigma$ are null.
This leaves us with the issue of the unique ergodicity of $\g$, which motivates the following.  

We first  make a couple of simple observations about  chains with weighted relocations; recall that they are denoted generically by $\X=(\X(n))_{n\geq 0}$ and that we write $\Pp$ for their distributions. We first consider the elementary case when the law $\tau$ has bounded support,  say with maximal element $d$, i.e.
$$\tau(d)>0=\tau(d+j),\quad \text{ for all }j\geq 1. $$ 
We then  write $\calS=S^{\{0, \ldots, d\}}$ and $\calX$ for the image of $\X$ by the projection from $\S$ to $\calS$; plainly $\calX$ is a Markov chain.   
\begin{lemma}\label{L:taubounded} Suppose that the law $\tau$ has bounded support. Then the Markov chain $\calX$ on the finite space $\calS$  is 
irreducible and aperiodic.\end{lemma}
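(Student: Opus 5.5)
The plan is to show that the transition matrix of $\calX$ on $\calS$ is primitive: for all sufficiently large $n$, every window can be reached from every other window in exactly $n$ steps. This gives irreducibility and aperiodicity at once.

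\textbf{Step 1: $\calX$ is a Markov chain and its allowed moves.} Let $d$ be the maximal element of the support of $\tau$. Then $\g(t\s)$ in \eqref{E:boldpi} depends only on $(s_0,\ldots,s_d)$, and $\Kk a(\s)$ does too. So $\calX$ is a Markov chain on $\calS$. Its only possible moves are from $(s_0,\ldots,s_d)$ to $(t,s_0,\ldots,s_{d-1})$, with probability $\g(t\s)$. Since $a>0$ and $\Kk a>0$, we have $\g(t\s)>0$ if and only if $\sum_{i\le d}\tau(i)\sigma(s_i,t)>0$. Because $\tau(d)>0$, this holds in particular whenever $\sigma(s_d,t)>0$.

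\textbf{Step 2: use only lag-$d$ relocations.} Write the trajectory as a sequence $(x_m)_{m\geq -d}$, with the window at time $n$ equal to $(x_n,x_{n-1},\ldots,x_{n-d})$. By Step 1, appending $x_{n+1}$ is allowed whenever $\sigma(x_{n-d},x_{n+1})>0$. If we only use this lag-$d$ rule, the sequence splits into $d+1$ interleaved subsequences, namely the positions in each residue class modulo $d+1$. Each subsequence is an arbitrary $\sigma$-admissible path, and the $d+1$ paths can be chosen independently of each other.

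\textbf{Step 3: primitivity.} Since $\sigma$ is irreducible and aperiodic, there is $N$ such that $\sigma^m(s,t)>0$ for all $s,t\in S$ and all $m\geq N$. Fix a starting window $(x_0,\ldots,x_{-d})$, a target window $(u_0,\ldots,u_d)$, and any $n\geq (d+1)(N+1)$.
\begin{itemize}
\item For $k=0,\ldots,d$, the target position $n-k$ lies in the residue class of exactly one initial position $-k'\in\{-d,\ldots,0\}$, and this matching $k\mapsto k'$ is a bijection.
\item The number of lag-$(d+1)$ steps from position $-k'$ to position $n-k$ is $(n-k+k')/(d+1)\geq N$.
\item Hence for each class there is a $\sigma$-path of that length from $x_{-k'}$ to $u_k$.
\end{itemize}
Interleaving these $d+1$ paths gives an admissible trajectory of length exactly $n$ from the starting window to the target window. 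So some $n$-step transition probability between them is positive, for every $n$ large enough.

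\textbf{Conclusion and main obstacle.} This shows the transition matrix of $\calX$ is primitive, hence irreducible and aperiodic. The main point needing care is the index bookkeeping in Step 3: checking that the residue-class matching is a bijection and that every class gets at least $N$ steps. The key idea is to use only the lag-$d$ relocation, whose probability $\tau(d)$ is guaranteed positive. This avoids any assumption on $\tau(0)$.
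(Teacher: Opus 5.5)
Your proof is correct and follows essentially the same route as the paper: you use only the lag-$d$ relocation, available since $\tau(d)>0$, to interleave $d+1$ independent admissible paths of $\sigma$ (the paper uses the biased matrix $\pi$, which has the same zero pattern), thereby connecting any two windows. The only difference is bookkeeping. The paper pairs $s_j$ with $t_j$ and reaches the target in exactly $k(d+1)$ steps, which already gives primitivity because a stochastic matrix with one entrywise positive power is primitive; your residue-class matching instead gives positivity for every large $n$ directly.
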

\begin{proof} Consider two arbitrary states in $\calS$, $(s_0, \ldots, s_d)$ and $(t_0, \ldots, t_d)$. We work with the Markov chain $\calX$ started from the first and have to
check that after some deterministic number of steps, the probability that it is found at the second state is strictly positive. 

The biased matrix $\pi$ inherits  irreducibility and aperiodicity from the sub-stochastic matrix $\sigma$, which
means that there is some large enough $k\geq 1$ such that all the entries of $\pi^k$ are strictly positive.  For every $j=0, \ldots, d$, we can thus find a sequence with length $k+1$
in $S$, say $u_j(\ell)$ for $\ell=0, \ldots, k$, starting from $u_j(0)=s_j$ and finishing at $u_j(k)=t_j$, such that $\pi(u_j(\ell-1), u_j(\ell))>0$ for every $\ell=1, \ldots, k$.

Since $\tau(d)>0$, the probability that after one step, 
$\calX(1)=(u_d(1),u_0(0), \ldots, u_{d-1}(0))$, is strictly positive.
By iteration, the probability after $d$ more steps we have
$\calX(d+1)=(u_0(1), \ldots, u_d(1))$,  is also strictly positive. 
We conclude from the Markov property that the probability that
$\calX(k(d+1))=\t$ is strictly positive.
\end{proof}
We stress that aperiodicity of $\pi$ is crucial: for instance, it is easy to see that if $\pi$ is irreducible and $2$-periodic, and if $\tau$ has support in $2\N$, then the chain with weighted relocations $\calX$ is reducible. 

We now turn our attention to the case when $\tau$ has unbounded support and establish a weaker version of Lemma \ref{L:taubounded}.
Recall the notion of topological mixing in Theorem \ref{L:ergDF}.

\begin{lemma}\label{L:tauunbounded}  For any  $a\in (0,\infty)^S$, topological mixing  holds for  the chain with weighted relocations and kernel $\g$ given in \eqref{E:boldpi}, and any Doeblin measure of the latter  has full topological support.
\end{lemma}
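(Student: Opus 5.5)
Topological mixing asks for a deterministic $n=n(\eps)$ such that, from any start, the chain lands within distance $\eps$ of any prescribed target with positive probability. Fix $\eps>0$ and pick $m$ with $2^{-m}<\eps$. Then $\mathbf{d}(\X(n),\t)<\eps$ holds as soon as $X_j(n)=t_j$ for $j=0,\ldots,m$. In other words, the first $m+1$ coordinates of $\X(n)$ must reproduce the word $(t_m,\ldots,t_0)$ in forward time. The plan is to reduce this to positivity of transitions of $\pi$, using only the ``$T=0$'' part of the kernel when $\tau(0)>0$. In general we will use relocations at a fixed lag $i$ with $\tau(i)>0$, in the spirit of Lemma \ref{L:taubounded}.

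\textbf{Lower bound on the kernel.} From \eqref{E:boldpi}, for any $i$ with $\tau(i)>0$,
$$\g(t\s)\geq \frac{\tau(i)\,\sigma a(s_i)}{\Kk a(\s)}\,\pi(s_i,t).$$
The factor $\sigma a(s_i)/\Kk a(\s)$ is bounded below uniformly by $\min_s\sigma a(s)/\max_s \sigma a(s)>0$, by irreducibility. So whenever $\pi(s_i,t)>0$, the transition $\s\mapsto t\s$ has positive probability. Fix one $i=d$ with $\tau(d)>0$; this exists since $\tau$ is a probability law.

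\textbf{Construction of the path.} With lag $d$ fixed, the dynamics ``always relocate by exactly $d$'' act on the last $d+1$ coordinates. Reading along residues mod $d+1$, each coordinate $X_0(n)$ is a $\pi$-step from $X_0(n-d-1)$. So the sequence decomposes into $d+1$ interleaved $\pi$-chains, which is exactly the mechanism in the proof of Lemma \ref{L:taubounded}. Choose $k$ with $\pi^k>0$ entrywise; this uses the irreducibility and aperiodicity of $\pi$ inherited from $\sigma$. Take $n=(d+1)(k+L)$ with $L$ large enough that $(d+1)L\ge m+1$. Along each residue class we follow a $\pi$-path of length $k+L$ that starts from the corresponding initial coordinate of $\s$ and has prescribed final $L$ letters. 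This is possible because $\pi^{k}>0$ reaches anything, and each subsequent step must be $\pi$-admissible. For that last requirement, apply $\pi^k>0$ recursively: choose the targets with spacing $k$ rather than $1$. Concretely, take $n=(d+1)k(m+1)$. Then successive prescribed letters in a residue class are separated by $k$ steps of that class, except at the final block, where the slots are arranged so that each class ends with its required letters. This requires $n$ to be chosen independently of $\s,\t$, which holds. The probability is a finite product of positive terms, hence positive.

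\textbf{Full support.} Let $\boldsymbol{\mu}$ be a Doeblin measure and $B$ an open ball of radius $\eps$ around $\t$. By the argument above, $\Pp_{\s}(\X(n)\in B)>0$ for every $\s$. This probability depends only on finitely many coordinates, so it is a continuous function of $\s$ and hence bounded below by some $c>0$ on the compact $\S$. Invariance then gives $\boldsymbol{\mu}(B)=\int\boldsymbol{\mu}(\d\s)\Pp_{\s}(\X(n)\in B)\ge c>0$.

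\textbf{Main obstacle.} The delicate point is the bookkeeping of interleaved residue classes, so that the last $m+1$ written letters match $\t$ exactly at a single deterministic time $n$ for all starting $\s$. The fallback, if the combinatorics gets messy, is to pass to the finite chain on $S^{\{0,\ldots,\max(d,m)\}}$ using only lag-$d$ moves and invoke the argument of Lemma \ref{L:taubounded} directly.
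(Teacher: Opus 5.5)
\textbf{There is a genuine gap.} It is in the path construction, at the step where you prescribe $m+1$ letters using moves of a single lag $d$.

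With lag $d$, the letter written at time $n-j$ is a $\pi$-step from the letter written at time $n-j-(d+1)$. So if $m>d$, the prescribed letters $t_j$ and $t_{j+d+1}$ sit at consecutive positions of the same residue class, and you need $\pi(t_{j+d+1},t_j)>0$. For an arbitrary target $\t$ this fails as soon as $\sigma$ has a zero entry.

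The positions of the prescribed letters are fixed by $\t$ and $n$. Hence no choice of $n$, in particular not $n=(d+1)k(m+1)$, can make them ``separated by $k$ steps'' within a class. Your fallback fails for the same reason: a window chain on $S^{\{0,\ldots,\max(d,m)\}}$ driven only by lag-$d$ moves is reducible when $m>d$.

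In fact the statement cannot hold for an arbitrary $\tau$. For $\tau=\delta_0$ one has $\g(t\s)=\pi(s_0,t)$. So if $\sigma(u,v)=0$, the cylinder $\{s_0=v,\ s_1=u\}$ is never reached after time $1$. Topological mixing then fails, and no Doeblin measure charges that cylinder.

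\textbf{The missing ingredient} is the standing assumption of this part of the paper: the lemma is introduced for $\tau$ with unbounded support. You wrote that some $d$ with $\tau(d)>0$ exists ``since $\tau$ is a probability law''; that is exactly where unbounded support must be used.

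Given $m$, choose $d\geq m$ with $\tau(d)>0$. Then the $m+1$ prescribed coordinates lie in distinct residue classes mod $d+1$. The argument of Lemma \ref{L:taubounded} applies with $n=k(d+1)$ steps: the class ending at time $n-j$ starts from $s_j$ and follows a $\pi$-path of length $k$ to $t_j$, using $\pi^k>0$. Every transition along this path has probability at least
\[
\tau(d)\,\frac{\min\sigma a}{\max\sigma a}\,\min\{\pi(u,v):\pi(u,v)>0\}.
\]
This gives the uniform bound \eqref{E:strongi}, with $n$ depending on $\eps$ only through $d$.

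\textbf{Full support.} Once that uniform bound is available, your invariance argument is correct. It is a self-contained replacement for the paper's appeal to \cite[Proposition 5.8(ii)]{BH}.

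One justification there is wrong, though. $\Pp_{\s}(\X(n)\in B)$ does \emph{not} depend only on finitely many coordinates of $\s$, because the kernel $\g$ sees the whole past. It is still continuous, by the Feller property, since $B$ is a clopen cylinder. More simply, the uniform positive lower bound comes directly from the path construction above.
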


\begin{proof}By definition of the distance $\mathbf{d}$ on  $\S$ and an argument similar to that for Lemma \ref{L:taubounded}, we can find for any $\ell\geq 1$ an integer $k(\ell)$ sufficiently large such that
\begin{equation}\label{E:strongi}
\inf_{\s,\t \in \S} \Pp_{\s}\left( \mathbf{d}(\X(k(\ell)),\t)\leq 2^{-\ell}\right )>0.
\end{equation}
The assertion that any invariant law for $\G^*$ has full topological support stems from \cite[Proposition 5.8(ii)]{BH}.
\end{proof}

We stress that topological mixing is a weaker property than irreducibility, and that contrary to the latter, it does not ensure uniqueness of the Doeblin measure (see \cite[Theorem 5.5]{BH}).

\begin{proposition}\label{P:chainmem}  Unique ergodicity holds for the kernel $\g$ given in \eqref{E:boldpi}  
for an arbitrary  $a\in (0,\infty)^S$ whenever either of the following two conditions are fulfilled:
\begin{enumerate}
\item[(i)] The law $\tau$ has a finite first moment, i.e.
\begin{equation} \label{E:tauf1}
\sum_{j\geq 1} j \tau(j) <\infty.
\end{equation}

\item[(ii)]  The sub-stochastic matrix $\sigma$ is positive, i.e. $\sigma(s,t)>0$ for all $s,t\in S$, and the tail of $\tau$ satisfies 
\begin{equation} \label{E:tauf2}
\sum_{i\geq n} \tau(i) =o(n^{-1/2}),\qquad \text{ as }n\to \infty.
\end{equation}
\end{enumerate}
Last, the defective kernel $\kk$ is H\"older-continuous whenever the tail of the law $\tau$ decays exponentially, i.e.
$$ \sum_{j\geq n} \tau(j) = o(b^{-n}), \qquad \text{for some }b>1.$$

\end{proposition}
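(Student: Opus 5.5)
The plan is to reduce everything to estimating the variation $\mathrm{var}_n(\g)$ of the kernel in \eqref{E:boldpi}, and then feed this into Theorem \ref{L:ergDF} for (i) and into Theorem \ref{L:ergJOP} for (ii). The basic observation is that if $\mathbf{d}(\s,\t)\leq 2^{-n}$, then $s_i=t_i$ for $i<n$. Hence in both the numerator $\sum_i\tau(i)\sigma(s_i,t)a(t)$ and the denominator $\Kk a(\s)=\sum_i \tau(i)\sigma a(s_i)$, only the terms with $i\geq n$ can differ between $\s$ and $\t$. Writing $\bar\tau(n)=\sum_{i\geq n}\tau(i)$, each of these differences is bounded by $C\,\bar\tau(n)$, with $C=\max a\cdot\max_s\sum_t\sigma(s,t)$ or similar.

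The denominator is bounded below uniformly: $\Kk a(\s)\geq \min_s \sigma a(s)>0$, which holds because $\sigma$ is irreducible, so no row of $\sigma$ vanishes. Writing $\g = N/D$ and using
$$|N(\s)/D(\s)-N(\t)/D(\t)|\leq |N(\s)-N(\t)|/D(\s)+N(\t)\,|D(\s)-D(\t)|/(D(\s)D(\t)),$$
we obtain $\mathrm{var}_n(\g)\leq C'\bar\tau(n)$ for some constant $C'$ depending only on $\sigma$ and $a$.

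For (i), the identity $\sum_{n\geq0}\bar\tau(n)=\sum_j (j+1)\tau(j)$ is finite under \eqref{E:tauf1}, so the variation is summable. Topological mixing is provided by Lemma \ref{L:tauunbounded}, so Theorem \ref{L:ergDF} yields dynamic uniqueness and hence unique ergodicity.

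For (ii), positivity of $\sigma$ together with $a>0$ gives $\g(t\s)\geq \tau(0)\ldots$. This is not quite enough when $\tau(0)=0$, so instead we use $\g(t\s)\geq \min_{s}\pi(s,t)>0$. This holds because $\g(\cdot\s)$ is a convex combination of the rows $\pi(s_i,\cdot)$, with weights $\tau(i)\sigma a(s_i)/\Kk a(\s)$, and all entries of $\pi$ are positive. Then \eqref{E:tauf2} gives $\mathrm{var}_n(\g)=o(n^{-1/2})$, and Theorem \ref{L:ergJOP} applies.

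For the Hölder statement, the same argument applied to $\kk$ directly gives $|\kk(t\s)-\kk(t\t)|\leq \bar\tau(n)$ whenever $\mathbf{d}(t\s,t\t)\leq 2^{-n-1}$. This case also needs to cover pairs of sequences with different first letters, where the distance is $1$ and $\kk$ is trivially bounded. If $\bar\tau(n)=o(b^{-n})$, choose $\alpha>0$ with $2^{\alpha}\leq b$; then $\bar\tau(n)\leq c\,2^{-\alpha n}$, which is exactly the bound $c\,\mathbf{d}^\alpha$ up to a constant. Positivity of $\kk$ is not claimed here, only Hölder continuity, so nothing further is needed.

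The main point requiring care is the uniform lower bound on the denominator and, in (ii), the positivity of $\g$; both follow from the convex-combination representation in \eqref{E:boldpi}. The rest is routine bookkeeping.
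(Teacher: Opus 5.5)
Your proposal is correct and follows the paper's own proof. You bound $\mathrm{var}_n(\g)$ by a constant times the tail $\sum_{i\geq n}\tau(i)$, using the uniform lower bound on $\Kk a$, and then conclude via topological mixing (Lemma \ref{L:tauunbounded}) with Theorem \ref{L:ergDF} for (i), via positivity of $\g$ with Theorem \ref{L:ergJOP} for (ii), and via the same tail estimate on $\kk$ for the H\"older claim. There is one harmless index shift, which the paper shares: since $\mathrm{var}_n$ is measured on the full argument $t\s$, the bound is really by the tail from $n-1$ (as you do handle correctly in the H\"older part), and this affects neither summability nor the $o(n^{-1/2})$ rate.
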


\begin{proof} Plainly, by \eqref{E:boldpi},  the variations of $\g$ can be controlled by the tail of $\tau$. Specifically
 for any $j\geq 0$ and any $\s,\s'\in \S$
with $\mathbf{d}(\s,\s')\leq 2^{-j}$ (recall that this means that the prefixes of length $j$ of $\s$ and $\s'$ are identical), there is the bound
$$|\g(\s)-\g(\s')| \leq c(\sigma, a)\frac{\max \sigma a}{\min \sigma a} \sum_{i\geq j} \tau(i),$$
where $c(\sigma, a)>0$ is some constant depending on $\sigma$ and $a$ only.
Since topological mixing is granted from Lemma \ref{L:tauunbounded}, (i) follows from Theorem \ref{L:ergDF}.
In turn, when $\sigma$ has positive entries, $\g$ is positive and  (ii) follows from Theorem \ref{L:ergJOP}.
The last claim should also be clear.
\end{proof} 

We can now state the main result of this section, which specializes   
Theorem \ref{T1} to the present setting.
\begin{corollary} \label{C2} 
Suppose either that \eqref{E:tauf1} holds, or that $\sigma(s,t)$ has positive entries
  and \eqref{E:tauf2} holds. 
Then for any
probability measure $\boldsymbol{\nu}$ on $\S$,
 there is the lower bound
 $$\liminf_{n\to \infty} \frac{1}{n} \log \P_{\boldsymbol{\nu}}(\z>n) \geq \sup_{a\in(0,\infty)^S}  \int_{\S} \boldsymbol{\mu}_a(\d \s) \log\left( \frac{ \Kk a(\s)}{{a}(s_0)}\right),
$$
where $ \boldsymbol{\mu}_a$ denotes the unique Doeblin measure of the kernel $\g$ in \eqref{E:boldpi}.
 \end{corollary}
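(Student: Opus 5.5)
The plan is to apply Theorem \ref{T1} to the defective kernel $\kk$ of the killed chain with relocations, using zero-order functionals $\a(\s)=a(s_0)$ with $a\in(0,\infty)^S$, and then take the supremum over $a$. Fix an arbitrary $a\in(0,\infty)^S$ and set $\a(\s)=a(s_0)$. This $\a$ depends only on the first coordinate, so it is continuous on $\S$. Since $a>0$, it belongs to $\calCp(\S)$.

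The first step is to check the hypotheses of Theorem \ref{T1}. We need $\Kk\a(\s)>0$ for every $\s$. We have
$$\Kk\a(\s)=\sum_{i\ge0}\tau(i)\,\sigma a(s_i),$$
and $\sigma a(s)>0$ for every $s\in S$, because irreducibility of $\sigma$ forces each row to have a positive entry. As $\tau$ is a probability law, $\Kk\a(\s)\ge \min_{s}\sigma a(s)>0$.

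The second step is unique ergodicity. The biased kernel \eqref{E.defgk} built from $\a$ is exactly the kernel $\g$ of \eqref{E:boldpi}. Under \eqref{E:tauf1}, Proposition \ref{P:chainmem}(i) gives unique ergodicity. Under positivity of $\sigma$ together with \eqref{E:tauf2}, Proposition \ref{P:chainmem}(ii) gives it. In either case we call the Doeblin measure $\boldsymbol{\mu}_a$.

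Theorem \ref{T1} then yields, for every $\boldsymbol{\nu}\in\calP(\S)$,
$$\liminf_{n\to\infty}\frac1n\log\P_{\boldsymbol{\nu}}(\z>n)\ \ge\ \int_\S\boldsymbol{\mu}_a(\d\s)\log\frac{\Kk a(\s)}{a(s_0)}.$$
The integrand is continuous and bounded: $\Kk a$ lies between $\min\sigma a$ and $\max\sigma a$, and $a(s_0)$ lies between $\min a$ and $\max a$. So the right-hand side is a finite number. The left-hand side does not depend on $a$, so taking the supremum over $a\in(0,\infty)^S$ gives the claim.

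The only point requiring care, and thus the main obstacle, is that the hypotheses of Theorem \ref{T1} must hold uniformly for every choice of $a$, not just for one. This is exactly why Proposition \ref{P:chainmem} was stated for arbitrary $a$. I would also note explicitly that the variation bound in its proof does not degenerate: for fixed $a$, the constants $c(\sigma,a)$ and $\max\sigma a/\min\sigma a$ are finite. Beyond this, the argument is a direct specialization.
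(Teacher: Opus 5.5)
Your proposal is correct and follows the paper's route exactly: specialize Theorem~\ref{T1} to zero-order functionals $\a(\s)=a(s_0)$, get $\Kk a>0$ from irreducibility of $\sigma$, get unique ergodicity of the kernel \eqref{E:boldpi} from Proposition~\ref{P:chainmem}, and then take the supremum over $a$. One small correction: no uniformity in $a$ is actually needed, because the bound is proved for each fixed $a$ separately and only then is the supremum taken.
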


The bound of Corollary \ref{C2} is in general weaker than that in Theorem \ref{T1}, because it only involves functionals $\a(\s)=a(s_0)$ that only depend on the zeroth term of $\s$. It  may still look rather obscure at first sight, as it is expressed in terms of  the Doeblin measures $\boldsymbol{\mu}_a$ which are not known explicitly.  We shall see in the next section that much more specific information can nonetheless be deduced.

We now conclude this section by pointing at an alternative expression for the lower bound in Corollary \ref{C2}
 in terms of the kernel  $\q: \S\to \calP(S)$, where for every $\s\in \S$,  $\q(\s)$ is 
 the row vector $(\q(\s,t))_{t\in S}$
with coordinates
\begin{equation} \label{E:occmeas}
        \q(\s,t)
        =
        \sum_{i: s_i=t}
        \tau(i), \qquad \text{for }t\in S.
      \end{equation}
      In words, $\q(\s)$ is the occupation measure of $\s$ on $S$ relative to the law $\tau$.
Writing any function on $S$ as a column vector, we have the expression
\[
        \Kk \bar a(\s)
        =
        \q(\s)\sigma a.
        \]
On the other hand, since all one-dimensional marginals of $\boldsymbol{\mu}_a$ are
identical, there is the identity
\[
        \int_{\S}\boldsymbol{\mu}_a(\d\s)\log a(s_0)
        =
        \int_{\S}\boldsymbol{\mu}_a(\d\s) \q(\s) \log a
      .
\]
We conclude that
\begin{equation} \label{E:T1alt}
        \int_{\S}
        \boldsymbol{\mu}_a(\d\s)
        \log\left(
        \frac{\Kk \bar a(\s)}{a(s_0)}
        \right)
        =
        \int_{\calP(S)}
        \left(\boldsymbol{\mu}_a\circ \q^{-1}\right)(\d p)
        \Big(
        \log(p \sigma a)
        -
       p \log a
        \Big),
\end{equation}
where $\boldsymbol{\mu}_a\circ \q^{-1}$ denotes  the pushforward measure of $\boldsymbol{\mu}_a$ by $\q$.

\section{Two consequences of the Perron--Frobenius theorem} \label{sec:PF}

Recall from the Perron--Frobenius theorem that  there is the convergence
 \begin{equation}\label{E:PF}\lim_{n\to \infty} r^{-n} \sigma^n(s,t) = h(s)\uprho (t), \qquad
  \text{for }s,t\in S,
 \end{equation}
 where $r\in (0,1)$ denotes the spectral radius of $\sigma$ (often also called the Perron--Frobenius eigenvalue), $\uprho \in \calP(S)$ is a probability measure on $S$ with full support,  and $h:S\to (0,\infty)$ is a positive function on $S$ 
 such that 
 \begin{equation} \label{E.specm}
\uprho \sigma  = r \uprho, \quad \sigma h=r h, \quad \text{and } \uprho h=\sum_{s\in S} \uprho (s) h(s) =1.
\end{equation}
Moreover these properties determine the spectral elements, in the sense that if $r'$ is another real number and $h'$ a nonnegative function on $S$ with $h'\not \equiv 0$
such that $\sigma h'=r'h'$, then $r'=r$ and $h'=ch$ for some $c>0$.
We stress that in  \eqref{E:PF} and  often later on, we  identify functions on $S$ as column vectors and probability measures on $S$ as row vectors, and apply the usual conventions for matrix multiplication. The probabilistic interpretation of \eqref{E:PF}  identifies the Perron--Frobenius eigenvalue
 $r$ as the persistence rate and  $\uprho$ is often referred to as the quasi-stationary distribution.

The observation at the heart of this section is that, if we choose  $a=h$, then  $\sigma h=rh= r a$. 
This will greatly simplify the lower bound in Corollary \ref{C2}. On the other hand, we stress that despite its simplicity, this choice is not optimal and different functions may yield sharper lower bounds.

 \subsection{Relocations  enhance persistence}
As a first application, we now show that the persistence rate for the killed chain with relocations, started with an arbitrary memory,
 is always at least as large as that for the benchmark. Here is the formal statement.

 \begin{theorem} \label{T2}
  For any probability measure $\boldsymbol{\nu}$ on $\S$, the following assertions hold:
  \begin{enumerate}
   \item[(i)] We always have
   $$\liminf_{n\to \infty} \frac{1}{n} \log  \P_{\boldsymbol{\nu}}(\z>n) \geq \log r. $$

  \item[(ii)] Suppose that $\tau$ is not a Dirac point mass and furthermore, either that \eqref{E:tauf1} holds, or that all the entries $\sigma(s,t)$ are positive 
  and \eqref{E:tauf2} holds. Then there is the strict inequality
   $$\liminf_{n\to \infty} \frac{1}{n} \log  \P_{\boldsymbol{\nu}}(\z>n)> \log r. $$

  \item [(iii)] If  $\tau$ is  a Dirac point mass, then 
$$ \lim_{n\to \infty} r^{-n} \P_{\boldsymbol{\nu}}(\z>n) = c(\boldsymbol{\nu})$$
for some $0<c(\boldsymbol{\nu})<\infty$. 

  \end{enumerate} 
  
   \end{theorem}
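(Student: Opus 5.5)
The plan is to apply the Feynman--Kac representation \eqref{E:FK} with the zero-order functional $\a(\s)=h(s_0)$, where $h$ is the Perron--Frobenius eigenvector in \eqref{E.specm}. Since $\sigma h=rh$, we get
\[
\Kk h(\s)=\sum_{i\geq 0}\tau(i)\,\sigma h(s_i)=r\,\q(\s)h .
\]
Hence the weight in \eqref{E:FK} becomes
\[
\prod_{j=1}^n \frac{r\,\q(\X(j-1))h}{h(X_0(j))},
\]
where $\X$ is the chain with weighted relocations for $a=h$. The one inequality driving parts (i) and (ii) is Jensen (concavity of $\log$) applied to the probability vector $\q(\s)$:
\[
\log(\q(\s)h)\geq \q(\s)\log h .
\]

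\emph{Part (i).} I will not invoke Theorem \ref{T1}, since (i) assumes no ergodicity. Instead I apply Jensen to $\Ee_{\boldsymbol\nu}$ in \eqref{E:FK} and then the concavity inequality above. This gives
\[
\log \P_{\boldsymbol\nu}(\z>n)\geq n\log r+\sum_{j=1}^n \Ee_{\boldsymbol\nu}\bigl(\q(\X(j-1))\log h - \log h(X_0(j))\bigr).
\]
Write $f(m)=\Ee\log h(X_0(m))$, extended to $m\leq 0$ by reading the initial memory, i.e. $f(-i)=\Ee\log h(X_i(0))$. Then $f$ is bounded by $\|\log h\|_\infty$. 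By the shift structure $X_i(j-1)=X_0(j-1-i)$, the sum equals
\[
\sum_{j=1}^n\Bigl(\sum_i\tau(i)f(j-1-i)-f(j)\Bigr).
\]
This sum telescopes. For $1\le m\le n-1$, the coefficient of $f(m)$ is $-\tau([n-m,\infty))$. The remaining terms ($m\le 0$ and $f(n)$) are bounded by $2\|\log h\|_\infty$. So the sum is at least $-\|\log h\|_\infty\bigl(2+\sum_{k<n}\tau([k,\infty))\bigr)$. This is $o(n)$, because the tails of $\tau$ tend to $0$ and hence so do their Ces\`aro means. Dividing by $n$ gives (i).

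\emph{Part (ii).} Under the stated hypotheses, Proposition \ref{P:chainmem} gives unique ergodicity of $\g$ for $a=h$, so Corollary \ref{C2} applies. By \eqref{E:T1alt}, the bound equals
\[
\log r+\int\boldsymbol\mu_h(\d\s)\bigl(\log(\q(\s)h)-\q(\s)\log h\bigr).
\]
It remains to show this integral is strictly positive. The integrand is continuous and nonnegative. It vanishes at $\s$ only if $h(s_i)$ takes a single value over all $i\in\mathrm{supp}\,\tau$.

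The function $h$ is not constant: if it were, $\sigma\mathbf 1=r\mathbf 1$, and $\sigma$ would be proportional to a stochastic matrix, which is excluded. Since $\tau$ is not a Dirac mass, pick $i\ne i'$ in its support and states $u,v$ with $h(u)\neq h(v)$. The cylinder $\{s_i=u,\,s_{i'}=v\}$ is open and nonempty, and the integrand is strictly positive on it. By Lemma \ref{L:tauunbounded}, $\boldsymbol\mu_h$ has full support, so this cylinder has positive mass and the integral is strictly positive. This step, securing unique ergodicity and full support, is the delicate one, and it is exactly what Proposition \ref{P:chainmem} and Lemma \ref{L:tauunbounded} provide.

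\emph{Part (iii).} If $\tau=\delta_d$, each step applies $\sigma$ to the state $d$ units earlier. The coordinates therefore split into $d+1$ interleaved, conditionally independent killed $\sigma$-chains. The $k$-th chain starts from $s_k$ and performs $n_k$ steps, where $\sum_k n_k=n$ and each $n_k\to\infty$.

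By \eqref{E:PF}, each factor satisfies $\sum_t\sigma^{n_k}(s_k,t)\sim r^{n_k}h(s_k)$, using $\uprho\mathbf 1=1$. Hence $r^{-n}\P_{\s}(\z>n)\to\prod_{k=0}^d h(s_k)$. Integrating against $\boldsymbol\nu$ by dominated convergence, using $r^{-m}\sigma^m\mathbf 1\le C$, gives $c(\boldsymbol\nu)=\int\boldsymbol\nu(\d\s)\prod_{k\le d}h(s_k)$, which lies in $(0,\infty)$.
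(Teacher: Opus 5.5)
Your proposal is correct. Parts (ii) and (iii) follow the paper's proof. For (ii) this is the choice $a=h$, unique ergodicity from Proposition~\ref{P:chainmem}, strict concavity of $\log$ together with the equal one-dimensional marginals of $\boldsymbol{\mu}_h$ (which you package through \eqref{E:T1alt}), full support from Lemma~\ref{L:tauunbounded}, and non-constancy of $h$. For (iii) it is the splitting into $d+1$ interleaved independent $\sigma$-chains; you spell this out in more detail than the paper (step counts $n_k$ summing to $n$, the limit $\prod_{k\le d}h(s_k)$, dominated convergence in $\boldsymbol{\nu}$).

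Part (i), however, takes a genuinely different route. The paper disposes of bounded support via (ii) and (iii). For unbounded support it truncates $\tau$ at level $d$, renormalizes, and puts the missing mass into $\sigma'=\overline{\tau}(d)\sigma$. The truncated chain is obtained from the original one by an extra geometric killing, so its lifetime is stochastically dominated. Applying (ii) gives $\liminf>\log(\overline{\tau}(d)r)$, and then $d\to\infty$. The paper's argument therefore ultimately rests on unique ergodicity and Breiman's law of large numbers.

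You instead apply \eqref{E:FK} with $a=h$ directly, and replace the ergodic theorem by an exact telescoping of expectations based on the shift structure $X_i(j-1)=X_0(j-1-i)$ of the memory. This needs no ergodicity, no moment condition on $\tau$ and no truncation. It also yields a non-asymptotic bound
\[
\P_{\boldsymbol{\nu}}(\z>n)\ \geq\ r^n\exp\Bigl(-2\|\log h\|_\infty\sum_{k=0}^{n-1}\tau([k,\infty))\Bigr).
\]
Hence under \eqref{E:tauf1} you even get $\P_{\boldsymbol{\nu}}(\z>n)\geq c\,r^n$ with $c>0$ independent of $n$ and $\boldsymbol{\nu}$, which the paper's argument does not give. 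The paper's truncation/coupling device is short once (ii) is available, and it is reused verbatim in the proof of Theorem~\ref{T3} for unbounded support.

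One bookkeeping slip in your telescoping: the terms with $m\le 0$ are not bounded by $\|\log h\|_\infty$ in total. At step $j$ the relocation reaches into the initial memory with probability $\tau([j-1,\infty))$. So the coefficients of $f(m)$, $m\le 0$, sum to $\sum_{k=0}^{n-1}\tau([k,\infty))$, which diverges when $\tau$ has infinite mean. The correct lower bound for your sum is $-2\|\log h\|_\infty\sum_{k=0}^{n-1}\tau([k,\infty))$, as in the display above. This is still $o(n)$ by your Ces\`aro argument, so the conclusion of (i) is unaffected.
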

 
 Theorem \ref{T2} extends the main result in \cite{BerBer} which is expressed in terms of  spectral radii; specifically (i) is proven there for a special choice of the probability measure $\boldsymbol{\nu}$ only and the cases where the inequality is actually an equality were not identified. The approach in \cite{BerBer}  relies on similar ideas, with some technical differences however (notably the chains involved there rather  lived in the product space $S\times  \S$
 and unique ergodicity is not needed there).

\begin{proof} It is convenient to prove the claims in their reversed order,  from the most specific to the most general.

(iii) Suppose that $\tau$ is a Dirac mass, say at $d\geq 0$. Then the dynamics of the killed chain with relocations  can be realized by intertwining $d+1$ independent killed Markov chains on $S$ driven by  $\sigma$, and the claim is thus immediate from the Perron--Frobenius theorem. 

(ii) We take here $a=h$, so  $\sigma a=rh$, and also recall that,  thanks to  Proposition \ref{P:chainmem},
 unique ergodicity  holds for the chain with weighted relocations $\X$. 
The logarithmic function is strictly concave;   we have thus for any $\t\in \S$ that 
$$\log \Kk h(\t) = \log\left( r \sum_{i\geq 0} \tau(i) h(t_i)\right) \geq  \log r + \sum_{i\geq 0} \tau(i)  \log h(t_i). $$
Further, this inequality is strict except if the function $i\mapsto  \log h(t_i)$ is constant on the support of $\tau$. 
We observe by induction that since the Doeblin measure $\boldsymbol{\mu}_h$ of $\X$ is invariant for $\G^*$, all its one-dimensional marginal laws are identical, and for all $i\geq 0$, 
$$\int_{\S}\boldsymbol{\mu}_h(\d \t)   \log h(t_i) =\int_{\S}\boldsymbol{\mu}_h(\d \t)   \log h(t_0). $$
 Hence we have 
 \begin{equation}\label{E:boundr}
 \int_{\S} \boldsymbol{\mu}_h(\d \t) \log\left( \frac{ \Kk h(\t)}{{h}(t_0)}\right) \geq \log r,
 \end{equation}
 and again this inequality is strict except if  the function $i\mapsto  \log h(t_i)$ is constant on the support of $\tau$ for $\boldsymbol{\mu}_h$-almost every $\t\in \S$. 
 
 We know from Lemma \ref{L:tauunbounded}  that the support of the Doeblin measure $\boldsymbol{\mu}_h$ is the whole of $\S$. 
Observe also that the eigenfunction $h$ is not constant on $S$, since otherwise 
the sub-stochastic matrix $\sigma$ would be proportional to a stochastic matrix and this has been excluded from the start.  
Since the support of $\tau$ contains at least two points,  the inequality \eqref{E:boundr} is strict and we conclude the proof with an appeal to Corollary \ref{C2}.

(i) Without loss of generality, we may assume that $\tau$ has unbounded support since otherwise  \eqref{E:tauf1} holds and  the claim is plain from (ii) and (iii).
We fix any $d\geq 1$ sufficiently large so that  the truncated law below is not a Dirac mass, and 
$\overline{\tau}(d)= \sum_{i\leq d} \tau(i)>0$, and consider the law $\tau'$ on $\{0,1, \ldots d\}$ given by $\tau'(j)= \tau(j)/\overline{\tau}(d)$ for $j=0, \ldots, d$. Write also $\sigma'= \overline{\tau}(d)\sigma$. 

 It is immediate to verify  that the killed chain with relocations induced by $\sigma'$ and $\tau'$ can be obtained from the killed chain with relocations induced by  $\sigma$ and $\tau$  by an additional  killing at an independent time with geometric distribution with success parameter $1-\overline{\tau}(d)$. 
 In particular, the lifetime $\z'$ of the former is stochastically dominated by the lifetime $\z$ of the latter, and 
 by (ii), we have therefore 
    $$\liminf_{n\to \infty} \frac{1}{n} \log  \P_{\boldsymbol{\nu}}(\z>n) > \log (\overline{\tau}(d)r),$$
since $\overline{\tau}(d) r$ is the Perron--Frobenius eigenvalue of $\sigma'$. It now suffices to let $d\to \infty$. 
\end{proof} 

\subsection{Large deviation lower bound for empirical measures} \label{sec:LD}

Roughly speaking, the purpose of this section is to point at a striking  reinforcement of Theorem \ref{T2}(i)
when one further focuses on trajectories in $S$ that visit states with pre-described frequencies. Formally, our analysis is done in the framework of the theory of large deviations;  we first introduce some notation and recall a well-known result in this field. 

Recall that for each finite path $\omega=(\omega_0, \omega_1, \ldots, \omega_{\zeta})$  in $S$  with duration $\zeta\geq 1$,
the empirical measures of $\omega$
are defined as 
$$L_n(\omega)= \frac{1}{n} \sum_{i=1}^{n} \delta_{\omega_i} \in \calP(S), \qquad \text{for  }n\leq \zeta.$$
We are interested in these (defective) random variables under two probability distributions on the space of finite paths.
The first is the benchmark  law $P_s$ of the killed Markov chain on $S$ started from $s$ and driven by the sub-stochastic matrix $\sigma$, 
and the second is the law $\P_{\s}$ of the version with relocations, started from some $\s\in \S$. 
Just as in \cite[Exercise 3.1.11]{DZ}, we introduce the rate function $I: \calP(S)\to [0, \infty]$  by the variational formula\footnote{The  reader is invited to observe the similarity with the lower bound appearing in Corollary \ref{C2}.}
$$I(\nu)=
\sup \sum_{s\in S} \nu(s) \log\left( \frac{a(s)}{\sigma a(s)} \right), $$
where  the supremum is taken over all  functions $a\in (0,\infty)^S$.
For any $s\in S$, the sequence of random variables $(L_n)_{n\geq 0}$ satisfies the large deviation principle under $P_s$ with the (convex good) rate function $I$;  see e.g. \cite[Section 3.1]{DZ}. Namely, we have for  every open set 
$G\subset \calP(S)$,
$$\liminf_{n\to \infty} \frac{1}{n} \log P_s\left( L_n\in G, \zeta \geq n\right) \geq - \inf_{\nu\in G} I(\nu),$$
whereas for every closed set $F\subset \calP(S)$,
$$\limsup_{n\to \infty} \frac{1}{n} \log P_s\left( L_n\in F, \zeta \geq n\right) \leq - \inf_{\nu\in F} I(\nu).$$
In particular, if $I$ is continuous on $\overline G$, then 
$$\lim_{n\to \infty} \frac{1}{n} \log P_s\left( L_n\in G, \zeta \geq n\right) = - \inf_{\nu\in G} I(\nu).$$

 The main result of this section is that the same large deviation lower bound remains valid for the killed chain with relocations.
 
 \begin{theorem} \label{T3} For any  $\s\in \S$ and any  open set 
$G\subset \calP(S)$, we have
$$\liminf_{n\to \infty} \frac{1}{n} \log \P_{\s}\left( L_n\in G, \z \geq n\right) \geq - \inf_{\nu\in G} I(\nu).$$
 
 \end{theorem}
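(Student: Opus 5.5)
The plan is a change of measure adapted to the relocation dynamics, combined with the entropy representation of the Donsker--Varadhan rate function. It is enough to show that for every $\nu\in G$ and every $\eps>0$,
$$\liminf_{n\to\infty}\frac1n\log\P_{\s}(L_n\in G,\ \z\ge n)\ge -I(\nu)-\eps .$$
I will use the finite-state identity (cf. \cite[Section 3.1]{DZ})
$$I(\nu)=\inf\Big\{\sum_{s\in S}\nu(s)\,H\big(q(s,\cdot)\,|\,\sigma(s,\cdot)\big)\Big\},$$
where the infimum is over stochastic matrices $q$ with $\nu q=\nu$ and $H$ denotes relative entropy; it is the convex dual of the variational formula defining $I$. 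By a perturbation argument (mixing $q$ with the biased matrix $\pi$ built from $h$, and $\nu$ with its invariant law), I will reduce to the case where $\nu\in G$ and $q$ is irreducible and aperiodic, with $q(s,t)>0$ exactly when $\sigma(s,t)>0$, $\nu q=\nu$, and the entropy sum is within $\eps$ of $I(\nu)$. I also reduce to $\tau$ with bounded support exactly as in the proof of Theorem \ref{T2}(i): truncating at $d$ costs only the factor $\overline\tau(d)^n$, and we then let $d\to\infty$.

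Next, I will introduce the conservative kernel with complete connections
$$\g(t\s)=\sum_{i\ge0}\tau(i)\,q(s_i,t).$$
This is a chain with relocations driven by $q$. By the argument of Lemma \ref{L:taubounded}, its projection to $S^{\{0,\dots,d\}}$ is an irreducible aperiodic finite Markov chain, so it is uniquely ergodic. Invariance forces every one-dimensional marginal $m$ of its Doeblin measure to satisfy $m=\sum_i\tau(i)\,m q=mq$, hence $m=\nu$. The ergodic theorem then gives $L_n\to\nu$ almost surely under $\Pp_{\s}$, and therefore $\Pp_{\s}(L_n\in G)\to1$. Since $\g(t\s)>0$ implies $\kk(t\s)>0$, the Feynman--Kac identity of the proof of Theorem \ref{T1} reads
$$\P_{\s}(L_n\in G,\z\ge n)=\Ee_{\s}\Big(\mathbf 1_{\{L_n\in G\}}\exp\Big(\sum_{j=1}^n\log\frac{\kk(\X(j))}{\g(\X(j))}\Big)\Big).$$

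The key estimate is pointwise in $\s$. By the log-sum inequality, equivalently joint convexity of relative entropy,
$$\sum_t\g(t\s)\log\frac{\kk(t\s)}{\g(t\s)}=-H\Big(\sum_i\tau(i)q(s_i,\cdot)\,\Big|\,\sum_i\tau(i)\sigma(s_i,\cdot)\Big)\ge-\sum_i\tau(i)\,H\big(q(s_i,\cdot)|\sigma(s_i,\cdot)\big).$$
The sum of log-ratios decomposes as a martingale with bounded increments (the ratios $\kk/\g$ are bounded above and below on the relevant support) plus the sum of these conditional means. The martingale part is $o(n)$ almost surely. The ergodic theorem, together with equality of the marginals, gives that the averaged conditional means have liminf at least $-\sum_s\nu(s)H(q(s,\cdot)|\sigma(s,\cdot))\ge -I(\nu)-\eps$. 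So with $\Pp_{\s}$-probability tending to one, both $L_n\in G$ and $\sum_{j\le n}\log(\kk/\g)\ge -n(I(\nu)+2\eps)$ hold. Restricting the expectation to this event yields the claim.

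I expect the main obstacle to be the reduction step. One must approximate $\nu\in G$ by a pair $(\nu',q)$ with $\nu'\in G$, $q$ irreducible and aperiodic on the support of $\sigma$, and entropy close to $I(\nu)$. This requires some care when $I(\nu)<\infty$ but the optimal $q$ is reducible or degenerate; I would first try a convex combination with the pair $(\text{invariant law of }\pi,\pi)$, using convexity of the entropy functional and the openness of $G$. A second concern is the cost of the truncation of $\tau$, which must be handled uniformly in $n$ on the event $\{L_n\in G\}$; I expect the geometric-killing coupling from Theorem \ref{T2}(i) to give this directly.
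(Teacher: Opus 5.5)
Your proof is correct, and it takes a genuinely different route from the paper's. For $\tau$ of finite support, the paper proceeds in four steps:
\begin{itemize}
\item it tilts $\sigma$ into $\sigma_a(s,t)=\sigma(s,t)\e^{\lambda(t)}$;
\item it uses Feynman--Kac and Perron--Frobenius for the finite-order killed chain to identify $\lim_{n}n^{-1}\log\E_{\s}(\exp(nL_n\lambda),\z\ge n)=\log\mathbf r_a$;
\item it invokes the G\"artner--Ellis theorem to obtain a full large deviation principle with rate $\mathbf I(\nu)=\sup_\lambda\{\nu\lambda-\log\mathbf r_a\}$;
\item it gets $\mathbf I\le I$ from the spectral comparison $\mathbf r_a\ge r_a$, i.e.\ from Theorem~\ref{T2}(i).
\end{itemize}
The truncation for unbounded $\tau$ is the same as yours.

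You instead prove the lower bound directly, by changing measure to the conservative relocation chain driven by a stochastic $q$ with $\nu q=\nu$. Two observations do the work. First, its stationary one-dimensional marginal is forced to be $\nu$. Second, by joint convexity of relative entropy, the per-step cost is at most the $\tau$-average of the row costs. Together these replace both the G\"artner--Ellis machinery and the eigenvalue comparison.

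What each route buys:
\begin{itemize}
\item The paper's route yields more: a full large deviation principle, upper bound included, for finite-support $\tau$. But it depends on Theorem~\ref{T2}(i) and on the hypotheses of G\"artner--Ellis, namely differentiability of $\lambda\mapsto\log\mathbf r_a$.
\item Yours is self-contained and makes the mechanism transparent. It does not use Theorem~\ref{T2}(i); taking $G=\calP(S)$, where $\inf I=-\log r$, it actually reproves it.
\end{itemize}

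To close the step you flagged as the main obstacle, do the mixing on joint laws, not on $q$ and $\nu$ separately. Set $\mu(s,t)=\nu(s)q(s,t)$ and $\mu_\pi(s,t)=\nu_\pi(s)\pi(s,t)$, and define $\mu_\delta=(1-\delta)\mu+\delta\mu_\pi$. Let $\nu_\delta$ be its common marginal and $q_\delta(s,t)=\mu_\delta(s,t)/\nu_\delta(s)$. Then:
\begin{itemize}
\item $\nu_\delta q_\delta=\nu_\delta$ holds automatically;
\item $q_\delta$ has exactly the zero pattern of $\sigma$;
\item the cost $\sum_{s,t}\mu(s,t)\log\frac{\mu(s,t)}{\mu_1(s)\sigma(s,t)}$, with $\mu_1$ the first marginal of $\mu$, is convex in $\mu$ by the log-sum inequality.
\end{itemize}
Mixing $q$ and $\nu$ separately would break invariance when $q$ is reducible.

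Two smaller points. Of the entropy representation of $I$, you only need the inequality $\inf_q\le I(\nu)$. For sub-stochastic $\sigma$ this reduces to the stochastic case via the $h$-transform $\pi$, since both sides shift by $-\log r$. Finally, your second concern is settled: the truncation is uniform in $n$ because $\kk'\le\kk$ pointwise, which gives path-by-path domination.
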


\begin{proof} We shall first prove the claim when the law $\tau$ has a finite support, and then extend to the general case by truncations. 

For some $\lambda\in \R_-^S$, set $a(t)=\e^{\lambda(t)}$ for every $t\in S$ and treat the function $a\in (0,1]^S$  as a column vector. 
Consider  the irreducible and aperiodic sub-stochastic matrix $\sigma_{a}$ with entries 
$$\sigma_{a}(s,t)=\sigma(s,t) a(t) =   \sigma(s,t)\e^{\lambda(t)}, \qquad 
\text{for } s,t\in S,$$
and denote by $r_{a}>0$ its Perron--Frobenius eigenvalue. 
Write $\P^{a}$ for the law of the killed chain with relocation for the same law $\tau$, but for the matrix $\sigma_{a}$ instead of $\sigma$. 
 By the Feynman--Kac formula,  for every $\s\in \S$ and $n\geq 1$, there is the identity
 $$\P_{\s}^{a}(\z \geq n) = \E_{\s}\left( \exp(nL_n \lambda), \z\geq n\right),$$
with the notation for matrix multiplication  $L_n \lambda= \sum_{s\in S} \lambda(s)L_n(s)$.

Suppose now that $\tau$ has a finite support,  so $\P^{a}$ can be viewed as law of some finite order Markov chain with killing, hence as some  irreducible and aperiodic killed Markov chain on a finite state space. If we write $\mathbf{r}_{a}>0$ for its Perron--Frobenius eigenvalue, then we thus have
for any $\s\in \S$,
$$\lim_{n\to \infty} n^{-1} \log \P_{\s}^{a}(\z \geq n) = \log \mathbf{r}_{a}, $$
and therefore also
$$\lim_{n\to \infty} n^{-1} \log  \E_{\s}\left( \exp(n L_n \lambda), \z\geq n\right)= \log \mathbf{r}_{a}.$$
By replacing $\lambda$ by $\lambda+c$ for an arbitrary $c>0$, we immediately see that the limit above holds more generally for any $\lambda\in \R^S$ and hence $a\in(0,\infty)^S$.

This enables us to apply the G\"artner--Ellis theorem (see \cite[Section 2.3]{DZ})
and we get that the sequence of random variables $(L_n)_{n\geq 0}$  in the simplex $\calP(S)$ satisfies the large deviation principle under $\P_{\s}$ with the (convex good) rate function 
$$\mathbf{I}(\nu)=\sup\left \{\nu \lambda-\log \mathbf{r}_{a}: \lambda\in \R^S \right \}.$$
Now, since we know from Theorem \ref{T2}(i), or directly from \cite{BerBer}, that $\mathbf{r}_{a} \geq r_{a}$, there is the inequality
$$\mathbf{I}(\nu)\leq \sup\left \{ \nu \lambda -\log r_{a}: \lambda\in \R^S\right \}.$$
On the other hand, it is known from Theorems 3.1.2 and 3.1.6 and Exercise 3.1.11 in \cite{DZ} that the right-hand side above equals $I(\nu)$.
This proves the large deviation lower bound when $\tau$ has finite support.

Finally, we deal with the case when $\tau$ has unbounded support in the same way as in the proof of Theorem \ref{T2}(i). We use the same notation 
and argument as there to get for any  open set 
$G\subset \calP(S)$ and $\s\in \S$ the bound
$$\P_{\s}\left( L_n\in G, \z \geq n\right) \geq  \P'_{\s}\left( L_n\in G, \z \geq n\right),$$
where $\P'$ designates the law of the killed chain with relocations for the matrix $\sigma'=\overline \tau(d) \sigma$ and the law $\tau'$.
Since $\tau'$ has finite support, we deduce from above that
$$\liminf_{n\to \infty} \frac{1}{n} \log \P_{\s}\left( L_n\in G, \z \geq n\right) \geq - \inf_{\nu\in G} I'(\nu),$$
where $I'$ is the rate function for the sub-stochastic matrix $\sigma'$. It is immediately checked  $I'=I-\log \overline \tau(d)$ and we conclude the proof by letting $d\to \infty$.
\end{proof}

\section{Widely dispersed relocations} \label{sec:disperse}

We turn our attention to the analysis of persistence for a widely spread out relocation law $\tau$. Informally speaking,  a dispersed relocation should induce averaging effects, and in particular we may expect that  the occupation measure $\q(\s)$ of a typical $\s\in S$ in the sense of \eqref{E:occmeas}, should concentrate around some $p\in \calP(S)$.
Thanks to \eqref{E:T1alt}, this should make the bound in Corollary \ref{C2} easier to evaluate. 
We shall see that this intuition can be made rigorous, and consider a family of laws on $\N$, $\tau_\eps$ with $ \eps>0$ such that
\begin{equation}\label{E:spread}
       \max_{i\geq 0} \tau_\eps(i)\leq \eps.
\end{equation}
Therefore we now incorporate $\eps$ as an index --or sometimes rather as an exponent or as a variable for convenience-- in our previous notation to stress dependency on this parameter. 

We further assume that the entries of $\sigma$ are positive and that \eqref{E:tauf2} holds for each $\eps>0$, 
so Proposition \ref{P:chainmem}
ensures unique ergodicity of the chain with kernel $\g_{\eps}$ defined  in \eqref{E:boldpi} for some arbitrary  function $a\in (0,\infty)^S$. We write
$\boldsymbol{\mu}_{\eps,a}$ for its Doeblin measure.
The key observation here is that the pushforward image of $\boldsymbol{\mu}_{\eps,a}$ by the occupation measure kernel $\q_{\eps}$ defined in \eqref{E:occmeas}
has a remarkably simple limit as $\eps\to 0+$.

 We have to introduce first some more notation.
Recall that  $\sigma_a$ is the square matrix with entries 
$ \sigma_a(s,t)=\sigma(s,t)a(t)$
and define the nonlinear map $\Phi_a: \calP(S)\to \calP(S)$  by 
\[
               \Phi_a(p)
        =
        \frac{p\sigma_a}{p\sigma_a 1_S},
        \qquad\text{for }
        p\in \calP(S),
\]
where $p\sigma_a$ is a row vector, and 
$1_S$ stands for the  column vector with all coordinates equal to $1$. 
With this notation at hand, observe that 
\[
        \Kk_{\eps} \bar a(\s)
        =
        \q_{\eps}(\s)\sigma a
        = \q_{\eps}(\s)\sigma_a 1_S ,
\]
so 
 \eqref{E:boldpi} becomes
\begin{equation}
\label{eq: gPhi}
        \g_{\eps}(\cdot \s)=\Phi_a\circ \q_{\eps}(\s), \qquad \text{for }\s\in \S.
\end{equation}
We also recall that $r_a>0$ is the Perron--Frobenius eigenvalue of $\sigma_a$   and denote its
normalized row eigenvector  by $\uprho_a\in\calP(S)$, that is, 
\begin{equation}\label{eq: fixed point}
        \uprho_a\sigma_a=r_a \uprho_a       ,\qquad \text{ or equivalently,  } \qquad  \Phi_a(\uprho_a)=\uprho_a \ \text{and}\  \uprho_a \sigma_a 1_S=r_a.
\end{equation}

\begin{theorem}\label{T4}
Assume that the entries of $\sigma$ are positive and that  the relocation 
laws $\tau_\eps$  satisfy \eqref{E:tauf2} for all $\eps>0$ and \eqref{E:spread}.
Then for every $a\in (0,\infty)^S$,  the pushforward measures
$\boldsymbol{\mu}_{\eps,a}\circ\q_\eps^{-1}
$
on $\calP(S)$  of the Doeblin measure $\boldsymbol{\mu}_{\eps,a}$ of $\g_{\eps,a}$ converge weakly, as $\eps\to0+$, to the Dirac mass at
$\uprho_a$. Moreover, the one-dimensional marginals of
$\boldsymbol{\mu}_{\eps,a}$ converge weakly to $\uprho_a$.
\end{theorem}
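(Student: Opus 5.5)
Under the stationary law $\Pp_{\boldsymbol{\mu}_{\eps,a}}$ of the chain with weighted relocations $\X^\eps$, consider the random occupation measure $Q_n=\q_\eps(\X^\eps(n))\in\calP(S)$. By invariance, the law of $Q_n$ is $\boldsymbol{\mu}_{\eps,a}\circ\q_\eps^{-1}$ for every $n$. The plan is to show two things: first, $Q_n$ is close to its conditional mean in $L^2$ when $\eps$ is small; second, its mean $m_\eps=\int\boldsymbol{\mu}_{\eps,a}(\d\s)\q_\eps(\s)$ approximately satisfies the fixed-point equation $\Phi_a(p)=p$ of \eqref{eq: fixed point}, whose unique solution in $\calP(S)$ is $\uprho_a$ by Perron--Frobenius. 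The statement about one-dimensional marginals then follows at once. All one-dimensional marginals of $\boldsymbol{\mu}_{\eps,a}$ are equal (shown in the proof of Theorem \ref{T2}), so the marginal law of $s_0$ equals $m_\eps$, and $m_\eps$ is also the mean of the pushforward.

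For the concentration step, write $Q_n(t)=\sum_i\tau_\eps(i)\mathbf 1\{X_i^\eps(n)=t\}=\sum_i \tau_\eps(i)\mathbf 1\{X^\eps_0(n-i)=t\}$, working with the two-sided stationary version. This is a $\tau_\eps$-weighted average of indicators of the past states $Y_k=X_0^\eps(k)$. By \eqref{eq: gPhi}, the conditional law of $Y_{k+1}$ given the past is $\Phi_a(Q_k)$, so $D_k=\mathbf 1\{Y_{k+1}=t\}-\Phi_a(Q_k)(t)$ is a bounded martingale difference sequence. This gives the decomposition
\[
Q_n(t)=\sum_i\tau_\eps(i)D_{n-i-1}+\sum_i\tau_\eps(i)\Phi_a(Q_{n-i-1})(t).
\]
By orthogonality of martingale increments, the first sum has second moment at most $\sum_i\tau_\eps(i)^2\le\eps$, using \eqref{E:spread}. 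The second sum is an average, with weights $\tau_\eps$, of the values $\Phi_a(Q_{k})$. The map $\Phi_a$ is smooth and Lipschitz on $\calP(S)$ because $\sigma$ has positive entries, so $p\sigma_a1_S\ge c>0$.

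To close the argument, I would linearize around the mean. Set $V_\eps=\Ee|Q_n-m_\eps|^2$, which does not depend on $n$ by stationarity. Taking expectations in the decomposition, together with Jensen and the Lipschitz bound, gives
\[
|m_\eps-\Ee\,\Phi_a(Q)|\to0 \quad\text{and}\quad |\Ee\,\Phi_a(Q)-\Phi_a(m_\eps)|\le C\sqrt{V_\eps}.
\]
The remaining goal is therefore $V_\eps\to0$. This is the main obstacle: the averaging term is a weighted mean of correlated values $\Phi_a(Q_k)$, and since $\Phi_a$ may not be a contraction, a naive Lipschitz bound only yields $V_\eps\le \eps + L^2V_\eps$, which is useless. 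My first attempt would be an iteration through the Perron--Frobenius structure. The linear map $p\mapsto p\sigma_a/r_a$ contracts towards $\uprho_a$ along the simplex directions after finitely many steps. Moreover, $\Phi_a$ is a projective version of this map, so it is a strict contraction in Hilbert's projective metric, by Birkhoff's theorem and the positivity of $\sigma_a$. Working in that metric, I would use convexity of projective balls, in the sense that the average of points in a ball stays in it, to show that the law of $Q$ concentrates. Suppose $Q_k$ lies in a projective ball of radius $R$ around $\uprho_a$ for all $k$ with high probability. Then $\Phi_a(Q_k)$ lies in the ball of radius $\kappa R$ with $\kappa<1$, and after adding the martingale noise of size $O(\sqrt\eps)$, $Q_n$ lies in the ball of radius $\kappa R+O(\sqrt\eps)$. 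Iterating this from the trivial bound (the whole simplex has finite projective diameter from $\uprho_a$ once one step of $\Phi_a$ is applied) gives a radius $O(\sqrt\eps)$. This yields both conclusions simultaneously: the pushforward converges to $\delta_{\uprho_a}$, and $m_\eps\to\uprho_a$. The control of ``with high probability for all $k$ in the relevant window'' should be handled in $L^2$ or in expectation of the projective distance rather than almost surely. This is where I expect the technical care, specifically a weighted averaging bound on the tail of $\tau_\eps$, to be needed.
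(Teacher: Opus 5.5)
Your ingredients coincide with the paper's: the two-sided stationary version, the decomposition of $Q_0=\q_\eps(\X^\eps(0))$ into the $\tau_\eps$-average $\overline Q=\sum_i\tau_\eps(i)\Phi_a(Q_{-i-1})$ plus a weighted martingale term of $L^2$-size $O(\sqrt\eps)$, Birkhoff's contraction of $\Phi_a$ in $\mathbf d_H$, convexity of Hilbert balls (the paper's identity $\operatorname{diam}_H(\operatorname{conv}A)=\operatorname{diam}_H(A)$), and uniqueness of the Perron fixed point. Your deduction of the marginal statement from the mean $m_\eps$ is also correct.

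The gap is the closing step. You flag it but do not carry it out, and the two ways you suggest handling it do not work as stated.
\begin{itemize}
\item Stationarity gives no control of the form ``$Q_k$ lies in a ball for all $k$ with high probability''. It controls one marginal at a time, and the window is infinite.
\item The expectation of $\mathbf d_H(Q_k,\uprho_a)$ can be infinite. If $\tau_\eps$ has finite support, which the hypotheses allow, then $Q_k$ assigns zero mass to some state with positive probability.
\end{itemize}

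What is needed is only the stationarity identity $\E\bigl[\sum_i\tau_\eps(i)\mathbf 1_{\{\mathbf d_H(Q_{-i-1},\uprho_a)>R\}}\bigr]=\P(\mathbf d_H(Q_0,\uprho_a)>R)$; no tail estimate on $\tau_\eps$ is involved. The ``bad'' part of $\overline Q$ still lies in the compact set $\operatorname{conv}\Phi_a(\calP(S))$ inside the open simplex, where Euclidean and Hilbert distances are locally comparable. Write $F(R)=\P(\mathbf d_H(Q_0,\uprho_a)>R)$, and let $R_0$ be the Hilbert radius of $\operatorname{conv}\Phi_a(\calP(S))$ around $\uprho_a$. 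Markov's inequality then gives, for small $\eta>0$,
\begin{itemize}
\item the starting bound $F(R_0+C\eta)\le C\sqrt\eps/\eta$;
\item the recursion $F(\kappa_a R+C\eta)\le(2F(R)+C\sqrt\eps)/\eta$.
\end{itemize}
For a fixed target radius only finitely many iterations are needed, so letting $\eps\to0$ yields $\mathbf d_H(Q_0,\uprho_a)\to0$ in probability. This produces no rate: choosing $\eta$ of order $\sqrt\eps$ makes the bound vacuous. So your claimed radius $O(\sqrt\eps)$ is not supported by this scheme.

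The paper sidesteps this bookkeeping by compactness. It passes to a subsequence along which $Q_0$ converges in law to some $\vartheta$ with support $K$. It then uses the same stationarity identity together with the Portmanteau theorem to get $K\subseteq\operatorname{conv}\Phi_a(K)$, so $K$ sits in the open simplex and has finite Hilbert diameter. Finally, $\operatorname{diam}_H(K)\le\kappa_a\operatorname{diam}_H(K)$ forces $K=\{\uprho_a\}$ in one line. Your iterative route, completed as above, is a valid alternative that avoids subsequences, but it is longer and gains nothing quantitative.
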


\begin{proof}
Let $\X^\eps=(\X^\eps(n))_{n\in\Z}$ be the stationary  two-sided version of the
chain with complete connections, with transition kernel $\g_{\eps,a}$ and
Doeblin measure $\boldsymbol{\mu}_{\eps,a}$.  We use a two-sided stationary
version to simplify notation for the past.
Consider also the occupation measure
process in $\calP(S)$,  $ \theta_\eps= \left(  \theta_\eps(n)\right)_{n\in \Z}$, where
\[
        \theta_\eps(n)=\q_{\eps}(\X^\eps(n)).
\]
By compactness of the space $\calP(S)$, we can henceforth assume that $\eps$ goes to $0$ along some sequence for which  $ \theta_\eps(0)$ 
converges in distribution to some random probability measure on $S$, say $\vartheta$.
 We write $K\subset \calP(S)$ for the topological support of the law of $\vartheta$ and will argue that $K$ must reduce to  a singleton in $\calP(S)$ and hence $\vartheta$ is deterministic.

If we write $\Delta_{n}^\eps\in \calP(S)$ for the Dirac mass at $X_0^\eps(n)$, and recalling that by \eqref{eq: gPhi},
the conditional expectation of $\Delta_{n+1}^\eps$ given $\X^\eps(n)$ is $
        \Phi_a(\theta_\eps(n))$, 
we have the representation
\[
        \theta_\eps(n)
        =
        \sum_{i=0}^{\infty}\tau_\eps(i)\Delta_{n-i}^\eps
        =
          \sum_{i=0}^{\infty}\tau_\eps(i)\Phi_a(\theta_\eps(n-i-1))
          +\sum_{i=0}^{\infty}\tau_\eps(i)\left(\Delta_{n-i}^\eps-\Phi_a(\theta_\eps(n-i-1))\right).
\]
The idea is the following: the second sum in the right-hand side  is a  sum of a weighted martingale difference (in $\calP(S)\subset \R^S$), so under \eqref{E:spread}, it is negligible in $L^2$ when $\eps$ is small. For the first sum  in the right-hand side,
we use stationarity: since all the random probability measures  $\theta_\eps(j)$ have the same
 law as $\theta_\eps(0)$, the topological  support $K$ of the limiting random probability measure $\vartheta$ should satisfy
\begin{equation}
    \label{eq: inclusion}
        K\subseteq \operatorname{conv}\Phi_a(K),
\end{equation}
with the notation $\operatorname{conv} A$ for the closed convex hull of $A\subset \calP(S)$.
However $\Phi_a(\cdot)$ acts as a normalized action of the strictly positive $\sigma_a$.  It is well-known that Hilbert's projective metric $\mathbf{d}_H$ is particularly convenient for analyzing the latter:
multiplication by a strictly positive matrix is a strict contraction in $\mathbf{d}_H$. Since normalization does not change the projective distance, 
$\Phi_a$ is also a strict contraction in it.  Therefore \eqref{eq: inclusion} forces $K$ to be a singleton, and the singleton
must be  the Perron fixed point $\uprho_a$.

We now make this argument precise, starting with \eqref{eq: inclusion}.
Fix $n=0$, and write
\[
        \overline\theta_\eps
        =
        \sum_{i=0}^{\infty}\tau_\eps(i)
        \Phi_a(\theta_\eps(-i-1))
        \quad \text{ and }\quad
        R_\eps
        =
        \sum_{i=0}^{\infty}\tau_\eps(i)M_{-i}^\eps, \qquad
        \text{ so that }
        \theta_\eps(0)
        =
        \overline\theta_\eps+R_\eps .
\]
Since the martingale differences $(M^\eps_i)$ are bounded and orthogonal in $L^2$,
\[
        \E\left[|R_\eps|^2\right]
        \leq
        C\sum_{i=0}^{\infty}\tau_\eps(i)^2
        \to0,
\qquad 
\text{ as }
        \eps\to 0+.
\]
Recall that  $\vartheta$ is a random variable in $\calP(S)$ distributed as a weak limit of 
$\theta_\eps(0)$ and that $K\subset \calP(S)$ is the topological support of the law of $\vartheta$.
Fix $\eta>0$, and let $K^\eta$ be the $\eta$-neighborhood of
$K$ in $\calP(S)$, say for the total variation distance. So $
        \P(\vartheta \notin K^\eta)=0$, and by Portmanteau theorem, $\P(\theta_\eps(0)\notin K^\eta)\to0$ as $\eps\to 0+$.
Therefore, using the stationarity of $(\theta_\eps(i))$, we get
\[
        \E\left[
        \sum_{i=0}^{\infty}\tau_\eps(i)
        \mathbf 1_{\{\theta_\eps(-i-1)\notin K^\eta\}}
        \right]
        =
        \sum_{i=0}^{\infty}\tau_\eps(i)
        \P(\theta_\eps(-i-1)\notin K^\eta)
        =
        \P(\theta_\eps(0)\notin K^\eta)
        \to0.\]
Hence
\[
        \sum_{i=0}^{\infty}\tau_\eps(i)
        \mathbf 1_{\{\theta_\eps(-i-1)\notin K^\eta\}}
        \to0
        \qquad\text{in } L^1,
\]
and \textit{a fortiori}
\[
        \operatorname{dist}\left(
        \overline\theta_\eps,
        \operatorname{conv}\Phi_a(K^\eta)
        \right)
        \to0
        \qquad\text{in }L^1,
\]
with the usual $\operatorname{dist}(x,A) = \inf\{ |x-y| \text{ for } y \in A \} $.
Since  $\theta_\eps(0)-\overline\theta_\eps
        =
        R_\eps
        \to0$ in $L^2$, the triangle inequality gives also
\[
        \operatorname{dist}\left(
        \theta_\eps(0),
        \operatorname{conv}\Phi_a(K^\eta)
        \right)
        \to0
        \qquad\text{in }L^1.\]
The map 
 $p\mapsto         \operatorname{dist}\left(
        p,
        \operatorname{conv}\Phi_a(K^\eta)
        \right)$
 being continuous  on $\calP(S)$, the weak convergence of $\theta_\eps(0)$ to $\vartheta$ implies 
 $\E\left[\operatorname{dist}(\vartheta , \operatorname{conv}\Phi_a(K^\eta))\right] = 0$,   
 that is,
 $$\vartheta \in \operatorname{conv}\Phi_a(K^\eta) \qquad \text{a.s.}$$
 We thus have $K\subset \operatorname{conv}\Phi_a(K^\eta)$, and  \eqref{eq: inclusion} follows by letting $\eta\to 0+$.

We continue with presenting the Hilbert projective distance $\mathbf{d}_H$. For any pair of row vectors $x,y \in (0,\infty)^S$ (in other words, $x$ and $y$ are measures on $S$ with full support), define the pseudo-distance 
\[
\mathbf{d}_H(x,y) = \log \frac{\max_s x(s)/y(s)}{\min_t x(t)/y(t)} = \max_{s,t}\log\frac{x(s)y(t)}{x(t)y(s)}.
\]
This can be thought of as a distance between the rays in $(0,\infty)^S$ starting from the origin and passing through $x$ and $y$. Its relevance stems from the strict contracting property, see \cite[Lemma 1]{Birkhoff}. Since the matrix $\sigma_a$ has positive entries, there exists $\kappa_a \in (0,1)$ such that 
\begin{equation} \label{E:Hcontract}
\mathbf{d}_H(\Phi_a(x),\Phi_a(y))=
\mathbf{d}_H(x\sigma_a,y\sigma_a)
\leq \kappa_a \mathbf{d}_H(x,y).
\end{equation}

Next define the Hilbert-diameter of a set $A \subseteq \calP(S)$ by $    \operatorname{diam}_H(A)
        =
        \sup_{x,y\in A}\mathbf d_H(x,y)$ 
        and observe that there is the identity
 \begin{equation} \label{E:disconv}
    \operatorname{diam}_H(\operatorname{conv}A)
        =
        \operatorname{diam}_H(A).
        \end{equation}
Indeed, we have by definition that for every $u,v\in A$ and every  $s,t\in S$,
\[
       u(s)v(t)\leq \e^{\operatorname{diam}_H(A)} u(t)v(s).
\]
Then consider two sequences  in $A$, $(u^k)_{k\geq 0}$ and $(v^\ell)_{\ell\geq 0}$, and two probability measures  on $\N$, $(\alpha_k)_{k\geq 0}$ and  $(\beta_\ell)_{\ell\geq 0}$,
and  set
\[
        x=\sum_k\alpha_k u^k
        \qquad \text{and}\qquad
        y=\sum_\ell\beta_\ell v^\ell.
\]
We have for any $s,t\in S$
\[
        x(s) y(t)
        =
        \sum_{k,\ell}\alpha_k\beta_\ell u^k(s) v^\ell(t)\leq
       \e^{\operatorname{diam}_H(A)}
       \sum_{k,\ell}\alpha_k\beta_\ell u^k(t) v^\ell(s) 
       =
       \e^{\operatorname{diam}_H(A)} x(t) y(s).
        \]
This shows $\mathbf d_H(x,y)\leq {\operatorname{diam}_H(A)}$, and thus establishes \eqref{E:disconv}.

We now apply this to the limiting support $K$. From the previous step,
\[
        K\subseteq\operatorname{conv}\Phi_a(K).
\]
Since $\sigma_a$ is strictly positive, $\Phi_a(\calP(S))$ is contained in the
interior of $\calP(S)$, and therefore $K$ has finite Hilbert diameter. Using \eqref{eq: inclusion}, \eqref{E:Hcontract}, and \eqref{E:disconv}, we get
\[
\operatorname{diam}_H(K)
        \leq
        \operatorname{diam}_H(\operatorname{conv}\Phi_a(K))
        \leq
        \operatorname{diam}_H(\Phi_a(K))
        \leq
        \kappa_a\operatorname{diam}_H(K),\]
and because $\kappa_a<1$, this implies that $\operatorname{diam}_H(K)
=0$. Hence all points of $K$ lie on one ray, but  they are all restricted to the simplex $\calP(S)$,  they in fact should coincide, that is, $K=\{p\}$ for some $p$.

Using again \eqref{eq: inclusion}, we get $p=\Phi_a(p)$. However, Perron theory ensures uniqueness of the fixed point $\uprho_a$ of $\Phi_a$ and 
$
        K=\{\uprho_a\}.
$
Thus any weak limit  of $\boldsymbol{\mu}_{\eps,a}\circ \q_\eps^{-1}$ is $\delta_{\uprho_a}$, and consequently
\[
  \lim_{\eps\to 0+}      \boldsymbol{\mu}_{\eps,a}\circ \q_\eps^{-1}
      =         \delta_{\uprho_a},
        \qquad \text{weakly}.
\]
Finally, the one-dimensional marginal laws of
$\boldsymbol{\mu}_{\eps,a}$ are all identical, and by \eqref{eq: gPhi}, they are given by 
\[
             \int_{\S}
        \boldsymbol{\mu}_{\eps,a}(\d\s)
        \Phi_a(\q_\eps (\s))=
            \int_{\calP(S)}
        \left(\boldsymbol{\mu}_{\eps,a}\circ \q_\eps^{-1}\right)(\d p)
        \Phi_a(p).
\]
Since $\Phi_a$ is continuous, this marginal converges to
$\Phi_a(\uprho_a)=\uprho_a$.
\end{proof}

The figure below illustrates  the concentration stated in Theorem  \ref{T4}. Numerical simulations have been performed for a state space $S$ with 2 elements and 
the sub-stochastic matrix $\sigma=\left( ^{0.72 \ 0.08}_{0.18\ 0.58} \right) $
and geometric relocation laws $\tau_\eps(k)=\eps(1-\eps)^k$ for $k\geq0$. The space $\calP(S)$ of probability measures on $S$ reduces to the  family 
of Bernoulli distribution with some parameter in $[0,1]$, and  
the occupation measure $ \theta_\eps(0)$ is thus represented by a random variable in $[0,1]$.
\begin{figure}[H]
\centering
\includegraphics[width=0.78\textwidth]{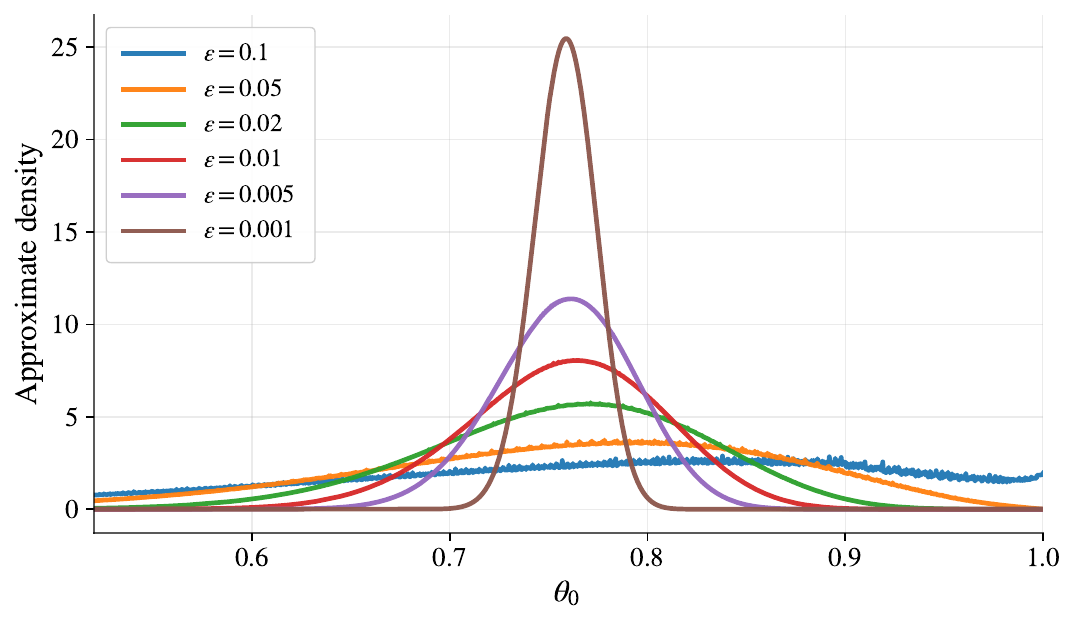}
\caption{Concentration of the occupation measure $\theta_\eps(0)$ for geometric relocation laws $\tau_\eps(k)=\eps(1-\eps)^k$, for six decreasing values of $\eps$.}
\label{fig:concentration}
\end{figure}

Theorem \ref{T4}  immediately yields a quantitative lower bound for
the spectral radius  $\mathbf{r}(\eps)$  of the killed chain with complete connections with a widely dispersed relocation law $\tau= \tau_\eps$.
Recall from  Gelfand's formula that  $\mathbf{r}(\eps)$ is related to persistence  by 
\[ \mathbf{r}(\eps) = \lim_{n\to\infty} \sup_{\boldsymbol{\nu}\in \calP(\S)}  \P^\eps_{\boldsymbol{\nu}}(\z >n)^{1/n},
\]
where $\P^\eps_{\boldsymbol{\nu}}$ denotes the law of the killed chain started from $\boldsymbol{\nu}$. We  arrive at the following asymptotic lower    bound.

\begin{corollary}\label{C3} Under the assumptions of Theorem \ref{T4}, we have 
\[
        \liminf_{\eps\to0+}
      \mathbf{r}(\eps)
        \geq
        \sup \left\{
        r_a
        \exp(-
       \uprho_a \log a) : a\in(0,\infty)^S
        \right\},
\]
where, for each function ${a\in(0,\infty)^S}$, $r_a$  is the Perron eigenvalue 
of the matrix  $\sigma_a$, and the quantity $ \uprho_a \log a$ denotes the mean value of the function $\log a$ with respect to the probability measure $\uprho_a$ corresponding to the normalized row eigenvector.
\end{corollary}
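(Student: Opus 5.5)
Fix an arbitrary $a\in(0,\infty)^S$. The plan is to combine the Gelfand formula with the lower bound of Corollary \ref{C2}, rewritten via \eqref{E:T1alt}, and then pass to the limit $\eps\to0+$ with Theorem \ref{T4}. Since $\sigma$ has positive entries and $\tau_\eps$ satisfies \eqref{E:tauf2}, Proposition \ref{P:chainmem} gives unique ergodicity of $\g_{\eps,a}$, and Corollary \ref{C2} applies for every $\boldsymbol{\nu}$. Gelfand's formula gives
\[
\log \mathbf{r}(\eps) \geq \sup_{\boldsymbol{\nu}}\liminf_{n\to\infty}\frac1n\log\P^\eps_{\boldsymbol{\nu}}(\z>n),
\]
exactly as in the proof of Corollary \ref{C1}. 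By \eqref{E:T1alt} we then obtain, for each fixed $\eps$,
\[
\log\mathbf{r}(\eps)\geq \int_{\calP(S)}\left(\boldsymbol{\mu}_{\eps,a}\circ\q_\eps^{-1}\right)(\d p)\,F_a(p),\qquad F_a(p)=\log(p\sigma a)-p\log a .
\]

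Next I would let $\eps\to0+$. The function $F_a$ is continuous on the whole simplex $\calP(S)$. Indeed, $p\sigma a\geq \min_s \sigma a(s)>0$ because $\sigma$ is irreducible and $a>0$, and $p\mapsto p\log a$ is linear with finite coefficients. Hence $F_a$ is bounded and continuous on the compact set $\calP(S)$. Theorem \ref{T4} states that $\boldsymbol{\mu}_{\eps,a}\circ\q_\eps^{-1}\to\delta_{\uprho_a}$ weakly, so the integral converges to $F_a(\uprho_a)$. Therefore
\[
\liminf_{\eps\to0+}\log\mathbf{r}(\eps)\geq \log(\uprho_a\sigma a)-\uprho_a\log a .
\]
By \eqref{eq: fixed point}, $\uprho_a\sigma a=\uprho_a\sigma_a 1_S=r_a$, so the right-hand side equals $\log\bigl(r_a\exp(-\uprho_a\log a)\bigr)$.

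Finally, take exponentials and the supremum over $a$; this is legitimate because the left-hand side does not depend on $a$. There is no real obstacle: the heavy lifting is Theorem \ref{T4}. The only point to verify carefully is that the weak convergence transfers to the integral. This needs $F_a$ bounded and continuous on all of $\calP(S)$, including the boundary, which the positivity of $\sigma a$ guarantees.
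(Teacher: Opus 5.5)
Your proposal is correct and matches the paper's proof: for each fixed $a$ you combine Corollary \ref{C2} and \eqref{E:T1alt} with the Gelfand-formula comparison from Corollary \ref{C1}, let $\eps\to0+$ via Theorem \ref{T4}, and conclude with $\uprho_a\sigma a=\uprho_a\sigma_a 1_S=r_a$. You also check explicitly that $p\mapsto\log(p\sigma a)-p\log a$ is bounded and continuous on all of $\calP(S)$, which is the step the paper leaves implicit.
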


\begin{remark}
Recalling \eqref{E.specm} and taking $a=h$, we immediately deduce the more explicit lower bound 
\[
        \liminf_{\eps\to0+}
        \mathbf{r}(\eps)
        \geq
         r   \exp(-
       \uprho_h  \log h) \uprho_h h .
     \]
We stress that, by  Jensen's inequality, the right-hand side is larger than the spectral radius $ r$ of $\sigma$; compare with Theorem \ref{T2}(i). 
\end{remark} 

\begin{proof} We know from Corollary \ref{C2} and \eqref{E:T1alt} that for any $a\in(0,\infty)^S$, 
\[ \log  \mathbf{r}(\eps)
\geq    \int_{\calP(S)}
        \left(\boldsymbol{\mu}_{\eps,a}\circ \q_\eps^{-1}\right)(\d p)
        \Big(
        \log(p \sigma a)
        -
       p \log a
        \Big).
\]
Our claim follows from Theorem \ref{T4} since
$\uprho_a \sigma a = \uprho_a\sigma_a 1_S = r_a$.
\end{proof}

 Figure~\ref{fig:log-radius} illustrates Corollary~\ref{C3} for the same example as in Figure~\ref{fig:concentration}, now for twelve small values of $\eps$.

\begin{figure}[H]
\centering
\includegraphics[width=0.78\textwidth]{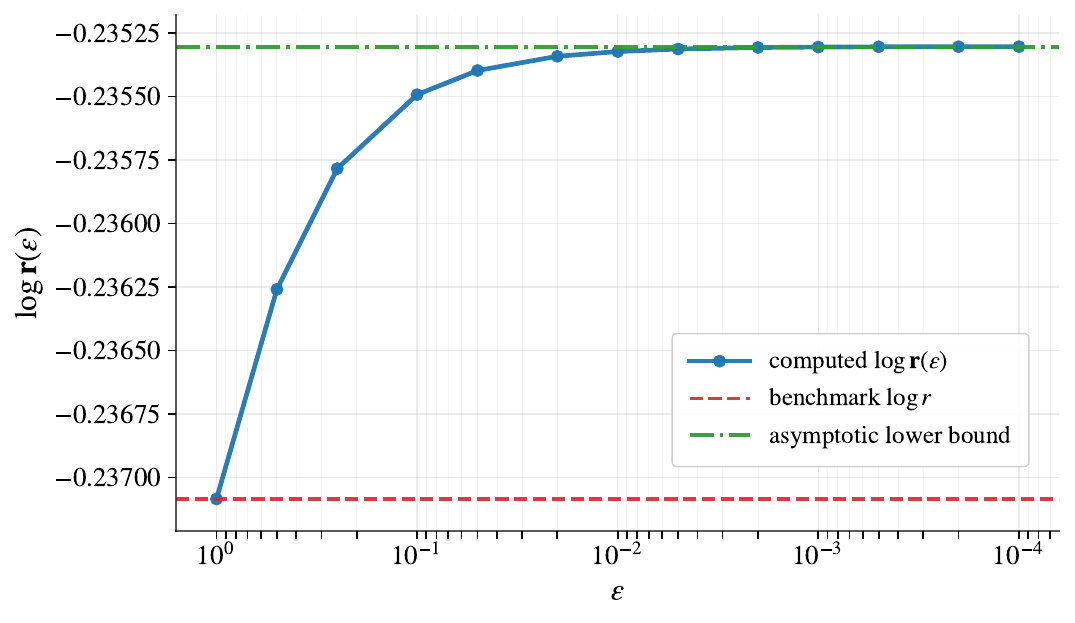}
\caption{Numerical logarithmic comparison of $\mathbf r(\eps)$, the benchmark value $r$, and the asymptotic lower bound from Corollary~\ref{C3}, for geometric relocation laws $\tau_\eps(k)=\eps(1-\eps)^k$.}
\label{fig:log-radius}
\end{figure}

It is remarkable that the lower bound for the limit inferior of $\mathbf r(\varepsilon)$ given there appears to be the exact value of the limit. Moreover, $\mathbf r(\varepsilon)$ seems always to lie between the lower bound $r$ and this limit. We do not have a satisfactory explanation for these observations, and we therefore raise the following question: for which substochastic matrices $\sigma$ does the upper bound
$$
\mathbf{r}(\tau) \le \sup \left\{ r_a \exp\bigl(-\uprho_a \log a\bigr) : a \in (0,\infty)^S \right\}
$$
hold for all relocation laws $\tau$, say with exponentially decaying tails? Of course, when this is the case, Corollary~\ref{C3} can then be strengthened into
\[
        \lim_{\eps\to0+}
        \mathbf{r}(\eps)
        =           \sup \left\{
        r_a
        \exp(-
       \uprho_a \log a) : a\in(0,\infty)^S
        \right\}.
     \]

\bibliography{Reloc.bib}

\end{document}